\documentclass[10pt,letterpaper]{amsart}

\usepackage{geometry}
\geometry{verbose,letterpaper,tmargin=3cm,bmargin=3cm,lmargin=2.5cm,rmargin=2.5cm}

\usepackage[dvipsnames]{xcolor}

\usepackage[inline,shortlabels]{enumitem}
\setlist{noitemsep}

\usepackage{amsmath,amssymb,amsthm}
\usepackage{mathtools}
\usepackage{stmaryrd}
\usepackage{textgreek}
\usepackage[retainorgcmds]{IEEEtrantools}
\usepackage{colonequals}
\usepackage{relsize}
\usepackage[bbgreekl]{mathbbol}
\usepackage{mathrsfs}
\usepackage{tikz-cd}
\usepackage{array}

\usepackage{todonotes}

\DeclareSymbolFontAlphabet{\mathbb}{AMSb} 
\DeclareSymbolFontAlphabet{\mathbbl}{bbold}
\usepackage{graphicx}

\makeatletter
\newcommand{\smallbullet}{} 
\DeclareRobustCommand\smallbullet{%
  \mathord{\mathpalette\smallbullet@{0.7}}%
}
\newcommand{\smallbullet@}[2]{%
  \vcenter{\hbox{\scalebox{#2}{$\m@th#1\bullet$}}}%
}
\makeatother
\usepackage[pdfusetitle,colorlinks,unicode]{hyperref}
\hypersetup{citecolor=black,linkcolor=black,urlcolor=black}
\usepackage[capitalise,noabbrev]{cleveref}
\crefformat{enumi}{#2\textup{(#1)}#3}
\crefformat{equation}{\ensuremath{(#2#1#3)}}
\crefmultiformat{equation}{\ensuremath{(#2#1#3)}}{ and~\ensuremath{(#2#1#3)}}{, \ensuremath{(#2#1#3)}}{, and~\ensuremath{(#2#1#3)}}
\numberwithin{figure}{section}
\numberwithin{equation}{section}
\newtheorem{theorem}[figure]{Theorem}
\AddToHook{env/theorem/begin}{\crefalias{figure}{theorem}}
\newtheorem{lemma}[figure]{Lemma}
\AddToHook{env/lemma/begin}{\crefalias{figure}{lemma}}
\newtheorem{corollary}[figure]{Corollary}
\AddToHook{env/corollary/begin}{\crefalias{figure}{corollary}}
\newtheorem{proposition}[figure]{Proposition}
\AddToHook{env/proposition/begin}{\crefalias{figure}{proposition}}

\AddToHook{env/conjecture/begin}{\crefalias{figure}{conjecture}}

\AddToHook{env/question/begin}{\crefalias{figure}{question}}

\theoremstyle{definition}
\newtheorem{definition}[figure]{Definition}
\AddToHook{env/definition/begin}{\crefalias{figure}{definition}}
\newtheorem{notation}[figure]{Notation}
\AddToHook{env/notation/begin}{\crefalias{figure}{notation}}
\newtheorem{remark}[figure]{Remark}
\AddToHook{env/remark/begin}{\crefalias{figure}{remark}}
\newtheorem{example}[figure]{Example}
\AddToHook{env/example/begin}{\crefalias{figure}{example}}
\newtheorem{construction}[figure]{Construction}
\AddToHook{env/construction/begin}{\crefalias{figure}{construction}}


\DeclareMathOperator{\gr}{gr}

\DeclareSymbolFontAlphabet{\mathbb}{AMSb} 
\DeclareSymbolFontAlphabet{\mathbbl}{bbold}

\DeclareMathOperator{\Spec}{Spec}

\DeclareMathOperator{\conj}{conj}

\DeclareMathOperator{\Fil}{Fil}
\DeclareMathOperator{\dR}{dR}


\newcommand{\op}{\mathrm{op}}
\newcommand{\perf}{\mathrm{perf}}




\newcommand{\id}{\mathrm{id}}


\newcolumntype{C}[1]{>{\centering\arraybackslash}p{#1}}

\mathchardef\mhyphen="2D

\newcommand\restr[2]{{\left.\kern-\nulldelimiterspace#1\vphantom{\big|}\right|_{#2}}}
\newcommand{\suchthat}{\;\ifnum\currentgrouptype=16 \middle\fi\vert\;}
\newcommand{\cosimp}[3]{\xymatrix@1{#1 \ar@<.4ex>[r] \ar@<-.4ex>[r] & {\ }#2 \ar@<0.8ex>[r] \ar[r] \ar@<-.8ex>[r] & {\ } #3  \cdots }} 
\usepackage[all]{xy}

\setcounter{tocdepth}{1}

\AtBeginDocument{%
   \def\MR#1{}
}

\title{Automorphisms of Frobenius twisted de Rham cohomology}

\author{Shizhang Li and Shubhodip Mondal}

\address[Shizhang Li]{Morningside Center of Mathematics and State Key Laboratory of Mathematics Sciences,
Academy of Mathematics and Systems Science, Chinese Academy of Sciences, Beijing 100190, China}
\email{lishizhang@amss.ac.cn}

\address[Shubhodip Mondal]{University of British Columbia, 1984 Mathematics Rd, Vancouver, BC V6T 1Z2, Canada}
\email{smondal@math.ubc.ca}

\date{First version August 1, 2025}

\begin{document}

\begin{abstract}
In this short paper, we prove that the moduli of automorphisms of Frobenius twisted de Rham cohomology functor
is given by $\mathbb{G}_m$. 
Our method is to use the notion of $\mathbb{G}_a^{\mathrm{perf}}$-modules and its connection to the de Rham cohomology functor introduced in \cite{M22}.
As an application of the induced $\mathbb{G}_m$-action, 
we reprove a result of Bhatt, Petrov, and Vologodsky on the decomposition of the Frobenius twisted de Rham complex. 
\end{abstract}

\maketitle
\tableofcontents

\section{Introduction}
In \cite{LM24}, we described the moduli of endomorphisms of de Rham cohomology in a variety of scenarios and deduced Drinfeld's refinement \cite[\S~5.12.1]{drinfeld} to the Deligne--Illusie decomposition theorem \cite{DI87} as an application (the latter conclusion is also due to Bhatt--Lurie, which uses ``prismatization"; see \cite[Rmk.~4.7.18]{BL22a} and \cite[Rmk.~5.16]{BL22b}). 
In this paper, inspired by the results in \cite[Rmk~2.7.4, 2.7.5]{fg} and \cite{Petrov25}, we consider the following moduli problem (\cref{moduliproblem}). 

\begin{definition}\label{def1}
Let $k$ be a perfect field. 
We use $F^*F_* \mathrm{dR}$ to denote
the functor $(\mathrm{Sch}_k^{\mathrm{sm}})^{\mathrm{op}} \to 
\mathrm{CAlg}(k)$, defined by the following assignment $$(\mathrm{Sch}_k^{\mathrm{sm}})\ni X \mapsto F_{X/k}^* F_{X/k,*} \Omega^*_{X/k},$$ where $\mathrm{CAlg}(k)$ denotes the $\infty$-category of $E_\infty$-algebras over $k$.
\end{definition}{}

\begin{definition}[Moduli of automorphisms]\label{moduliproblem} Let $k$ be a perfect field. 
Let $\mathrm{Alg}_{k}$ denote the category of ordinary commutative $k$-algebras. We define a moduli functor $$\mathcal{M}^{\mathrm{Aut}}_{}\colon \mathrm{Alg}_{k} \to \infty\mathrm{-Groupoids}$$ by the assignment 
$$\mathrm{Alg}_{k} \ni B \mapsto \mathrm{Aut}(F^*F_*\mathrm{dR} \otimes_k B),$$ where $F^*F_*\mathrm{dR} \otimes_k B \colon (\mathrm{Sch}_k^{\mathrm{sm}})^{\mathrm{op}} \to 
\mathrm{CAlg}(B) $ is the functor base changed from \cref{def1}. 
\end{definition}

Our main new theorem is the following.

\begin{theorem}\label{mainthm1}
 In the above set up, we have an isomorphism  $$\mathcal{M}^{\mathrm{Aut}} \simeq \mathbb{G}_m,$$ compatible with the group structures.   
\end{theorem}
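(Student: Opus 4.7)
My plan is to apply the framework of $\mathbb{G}_a^{\mathrm{perf}}$-modules from \cite{M22}, under which de Rham cohomology and its variants such as $F^*F_*\mathrm{dR}$ appear as algebra objects in a well-understood module category; this translates the automorphism problem into a tractable algebraic one.

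The obvious group homomorphism $\mathbb{G}_m \to \mathcal{M}^{\mathrm{Aut}}$ sends $\lambda \in B^\times$ to the natural transformation that acts on a degree-$i$ form $\omega$ by $\lambda^i \omega$. Since $\lambda^i \cdot \lambda^j = \lambda^{i+j}$, this respects the wedge product, hence the $E_\infty$-algebra structure; it is natural in smooth pullbacks, so it defines an element of $\mathcal{M}^{\mathrm{Aut}}(B)$, and functoriality in $B$ is immediate. The content of the theorem is that this map is an isomorphism.

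To establish the isomorphism, I would proceed in two steps: (i) identify the specific algebra object $\mathcal{A}_B$ in $\mathbb{G}_a^{\mathrm{perf}}$-modules associated to $F^*F_*\mathrm{dR} \otimes_k B$ under the correspondence from \cite{M22}; and (ii) directly compute that $\mathrm{Aut}(\mathcal{A}_B) = B^\times$, functorially in $B$. The operation $F^*F_*$ should substantially simplify the module-theoretic description, essentially ``untwisting'' the Cartier structure into something more symmetric, so I expect $\mathcal{A}_B$ to be cyclic or free over a natural subalgebra; automorphisms would then be determined by their effect on a single generator, and a degree-count should recover exactly the $\mathbb{G}_m$-scaling above.

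The main obstacle is step (ii): identifying $\mathcal{A}_B$ precisely and excluding exotic automorphisms, which would likely come from Cartier-type operators, while maintaining functoriality in $B$ rather than arguing only at $B=k$. Since the paper reproves the Bhatt--Petrov--Vologodsky decomposition as an \emph{application} of \cref{mainthm1}, the argument must not invoke that decomposition; instead, it should rely only on the intrinsic structure of $\mathbb{G}_a^{\mathrm{perf}}$-modules and the explicit action of $F^*F_*$ on the universal de Rham algebra object, perhaps using faithfully flat descent along the Frobenius $F\colon \mathbb{G}_a^{\mathrm{perf}} \to \mathbb{G}_a^{\mathrm{perf}}$ to reduce to a simpler computation.
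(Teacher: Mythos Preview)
Your proposal has the right orientation---use the framework of \cite{M22} to convert the problem into one about a single structured object---but there is a genuine gap at the very first step, and the shape of the target computation is also not quite right.

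The ``obvious'' map $\mathbb{G}_m \to \mathcal{M}^{\mathrm{Aut}}$ you describe, sending $\lambda$ to the operator that scales a degree-$i$ form by $\lambda^i$, is \emph{not} well-defined on $F^*F_*\mathrm{dR}$ a priori. The de Rham complex has a differential, and scaling $\Omega^i$ by $\lambda^i$ does not commute with $d$ unless $\lambda = 1$. What you are implicitly using is a splitting of the conjugate filtration on $F^*F_*\mathrm{dR}$, which is precisely \cref{maincoro}; you yourself note that the argument must not invoke that decomposition. In the paper the logic runs the other way: one computes $\mathcal{M}^{\mathrm{Aut}}(B)$ abstractly and only afterwards interprets the resulting $\mathbb{G}_m$-action in terms of weights on the graded pieces (\cref{mainthm2}), thereby obtaining the splitting.

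On the structural side, the object attached to $F^*F_*\mathrm{dR}\otimes_k B$ is not an ``algebra object in $\mathbb{G}_a^{\mathrm{perf}}$-modules'' but an \emph{augmented} $\mathbb{G}_a^{\mathrm{perf}}$-module, more specifically a nilpotent quasi-ideal in $\mathbb{G}_a^{\mathrm{perf}}$ (\cref{augmodule}, \cref{sec3}). The paper's proof has three substantive ingredients that your sketch does not yet reach: (a) quasi-syntomic descent plus \cref{automorphism is linear over Fil0} to reduce to the under-category $\mathrm{Fun}(\mathrm{QRSP}_k,\mathrm{Alg}_B)^{\otimes}_{\mathfrak{G}/}$; (b) the identification (\cref{transm1}) of $F^*F_*\mathrm{dR}\otimes_k B$ with $\mathrm{Tm}\big(\varphi_* u^*(W[F]^{(p)} \xrightarrow{0} \mathbb{G}_a)\big)$, which hinges on the Verschiebung exact sequence $0 \to W[F]^{(p)} \xrightarrow{V} W[F] \to \alpha_p \to 0$ to ``untwist'' the augmentation to the zero map; and (c) the endomorphism computation (\cref{endoauto}), which uses full faithfulness of $u^*$ and graded Cartier duality to reduce to endomorphisms of $\mathbb{G}_a$ as a graded group scheme, giving exactly $(B,\cdot)$. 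Your intuition that the answer is governed by a single generator and a degree count is morally correct, but it is realized through Cartier duality of $W[F]$, not through a free-module argument or descent along Frobenius on $\mathbb{G}_a^{\mathrm{perf}}$.
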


To prove our result, we use quasi-syntomic descent, and general techniques for understanding certain ``cohomological" functors on quasi-regular semiperfect algebras (which includes e.g., de Rham cohomology) as in \cite{M22}, which is renamed ``semiperfect transmutation" in this paper. The key idea in the proof of \cref{mainthm1} is to use this transmutation construction to reduce the calculation of automorphisms of $F^* F_* \dR$ to the calculation of automorphisms of a simpler, more tractable object, called an \emph{augmented $\mathbb{G}_a^\perf$-module} (see \cref{sec3}).
The results of this paper, combined with that of \cite{LM24} are presented in the table below.

\begin{center}

\bgroup
\def\arraystretch{1.5}
\begin{tabular}{ | C{17em} | C{17em}| } 
\hline
\textbf{Cohomology theory} & \textbf{Moduli of automorphisms} \\ 

\hline
${\mathrm{dR}\colon (\mathrm{Sch}_k^{\mathrm{sm}})^{\mathrm{op}} \to 
\mathrm{CAlg}(k)}$ & $\left \{\mathrm{id} \right \} $ \\ 
\hline
${\mathrm{dR}^n\colon (\mathrm{Sch}_{W_n(k)}^{\mathrm{sm}})^{\mathrm{op}} \to 
\mathrm{CAlg}(k)}, n \ge 2 $ & ${\mathbb{G}_m^\sharp}$ \\
\hline
${F^*F_*\mathrm{dR}\colon (\mathrm{Sch}_k^{\mathrm{sm}})^{\mathrm{op}} \to 
\mathrm{CAlg}(k)}$ & $\mathbb{G}_m$\\

\hline

\hline
\end{tabular}
\egroup
\end{center}
\vspace{2mm}
As a consequence of \cref{mainthm1}, we obtain a $\mathbb{G}_m$-action on the functor $F^* F_* \mathrm{dR}$. The following result describes how this $\mathbb{G}_m$-action interacts with the conjugate filtration induced on $F^*F_* \dR$ (see \Cref{conjugate filtration}). 

\begin{theorem}
\label{mainthm2}
The $\mathbb{G}_m$-action preserves the conjugate filtration on $F^*F_* \dR$ ,
and $\gr^{\mathrm{conj}}_i F^* F_* \dR [i]$ is a $\mathbb{G}_m$-representation purely of weight $i$.
\end{theorem}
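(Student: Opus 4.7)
The plan is to exploit the transmutation description of $F^*F_*\dR$ that underlies the proof of \Cref{mainthm1}. By quasi-syntomic descent, both assertions need only be verified after evaluating on quasi-regular semiperfect $k$-algebras $R$. On any such $R$, the semiperfect transmutation construction of \cite{M22} identifies $F^*F_*\dR(R)$ with an augmented $\mathbb{G}_a^{\perf}$-module $M_R$, and the proof of \Cref{mainthm1} identifies the $\mathbb{G}_m$-action on the former with the scaling action of $\mathbb{G}_m$ on $\mathbb{G}_a^{\perf}$ on the latter.

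The next step is to match the conjugate filtration on $F^*F_*\dR(R)$ with the filtration on $M_R$ induced by powers of the augmentation ideal of its $\mathbb{G}_a^{\perf}$-module structure. Because the scaling action of $\mathbb{G}_m$ on $\mathbb{G}_a^{\perf}$ preserves the augmentation ideal (acting on it with weight $1$), this filtration is tautologically $\mathbb{G}_m$-stable, which yields the first half of the theorem. For the weight assertion, the $i$-th graded piece of the augmentation-ideal filtration on $M_R$ is built from an $i$-fold tensor/symmetric construction of the weight-$1$ conormal, so $\mathbb{G}_m$ acts there by the $i$-th power of the scaling character, i.e.\ with pure weight $i$. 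The shift $[i]$ does not affect weights.

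The main obstacle will be the identification of the conjugate filtration with the augmentation-ideal filtration under transmutation. I would carry this out by comparing graded pieces: the Cartier isomorphism computes $\gr^{\mathrm{conj}}_i F^*F_*\dR$ in terms of differentials on $X^{(1)}$ pulled back along Frobenius, while the graded pieces of the augmentation-ideal filtration on $M_R$ admit an explicit description from the transmutation recipe. Once the two descriptions are shown to agree naturally in $R$, the weight computation reduces to the elementary observation that an $i$-fold multilinear construction from a weight-$1$ representation has pure weight $i$, and the theorem follows.
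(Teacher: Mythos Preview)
Your proposal rests on a misunderstanding of the transmutation construction.  Transmutation does \emph{not} turn each value $F^*F_*\dR(R)$ into an augmented $\mathbb{G}_a^{\perf}$-module $M_R$; rather, a \emph{single} augmented $\mathbb{G}_a^{\perf}$-module $M \coloneqq \varphi_* u^*(W[F]^{(p)} \xrightarrow{0} \mathbb{G}_a)$ gives rise, via $\mathrm{Tm}$, to the entire functor $R \mapsto F^*F_*\dR(R)$.  The values $\mathrm{Tm}(M)(R)$ are ordinary $B$-algebras, with no $\mathbb{G}_a^{\perf}$-module structure and no ``augmentation-ideal filtration'' in general.  Consequently the proposed matching of the conjugate filtration with such a filtration, and the weight argument built on it, have no object to attach to.

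A second gap is the identification of the $\mathbb{G}_m$-action: the action produced by \Cref{mainthm1} is \emph{not} the scaling action of $\mathbb{G}_m$ on the base $\mathbb{G}_a^{\perf}$.  By \Cref{endoauto} it is the automorphism group of the specific module $M$ above, computed via Cartier duality, and $b \in B^\times$ acts by $x_0 \mapsto b\cdot x_0$ on $\mathcal{O}(W[F]^{(p)}) \subset \mathcal{O}(M)$.  The variables $x^{1/p^i}$ coming from $\mathbb{G}_a^{\perf}$ are fixed.  So the ``weight-$1$ conormal'' picture does not apply.

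The paper proceeds quite differently.  Preservation of the conjugate filtration is handled by \Cref{automorphism is linear over Fil0}, a functoriality argument (as in \cite[Lemma~5.3]{LM24}) that uses no identification of filtrations.  For the weight statement, the paper reduces (following \cite[Theorem~5.4]{LM24}) to exhibiting a single nonzero weight-$1$ class in $\mathrm{H}^1$ for $\mathbb{A}^1_k$: one writes $F^*F_*\dR(k[x])$ as a totalization over the \v{C}ech nerve of $k[x] \to k[x^{1/p^\infty}]$, uses the axioms of \Cref{otime} together with \Cref{Computing the complicated Gaperf module} to make the first terms explicit, and checks that the element $y_0$ is a cocycle of weight~$1$.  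Indecomposability of $\mathrm{H}^1$ over $k[x]$, K\"unneth, and left Kan extension then finish the argument.
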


As a consequence of \cref{mainthm2}, we obtain \begin{corollary}[c.f.~{\cite[Rmk.~2.7.5]{fg} and \cite[Thm.~1.2]{Petrov25}}]\label{maincoro}
For a smooth $k$-scheme $X$, there is a functorial decomposition in $D(X, \mathcal{O}_{X^{(1)}})$ 
\[
F^* F_* \mathrm{dR}_X \cong \bigoplus_{i \geq 0} F^* \Omega^i_{X^{(1)}/k}[-i].
\]
\end{corollary}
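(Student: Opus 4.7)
The plan is to deduce Corollary~\ref{maincoro} as a formal consequence of Theorem~\ref{mainthm2}. The key point is that since $\mathbb{G}_m$ is linearly reductive over any field, any $\mathbb{G}_m$-equivariant complex in $D(X, \mathcal{O}_{X^{(1)}})$ decomposes canonically as a direct sum of weight spaces. Applying this to the canonical $\mathbb{G}_m$-action on $F^* F_* \dR_X$ furnished by Theorem~\ref{mainthm1}, I obtain a functorial decomposition
\[
F^* F_* \dR_X \simeq \bigoplus_{n \in \mathbb{Z}} C_n,
\]
where $C_n$ denotes the weight-$n$ component. Functoriality in $X$ is automatic, since the $\mathbb{G}_m$-action of Theorem~\ref{mainthm1} is a functorial structure on $F^* F_* \dR$ as a whole.

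To identify $C_n$ with the desired summand, I invoke Theorem~\ref{mainthm2}. Since the conjugate filtration is $\mathbb{G}_m$-stable, each $\Fil^{\mathrm{conj}}_k F^* F_* \dR_X$ inherits a weight decomposition compatible with the inclusions. An induction on $k$, using the cofiber sequence $\Fil^{\mathrm{conj}}_{k-1} \to \Fil^{\mathrm{conj}}_k \to \gr^{\mathrm{conj}}_k$ together with the purity statement that $\gr^{\mathrm{conj}}_k$ is of pure weight $k$, shows that the weight-$n$ part of $\Fil^{\mathrm{conj}}_k$ equals $C_n$ when $0 \leq n \leq k$ and vanishes otherwise, and moreover that the natural map $C_n \hookrightarrow \Fil^{\mathrm{conj}}_n \twoheadrightarrow \gr^{\mathrm{conj}}_n$ is an isomorphism. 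Combined with the Cartier identification $\gr^{\mathrm{conj}}_n F^* F_* \dR_X \simeq F^* \Omega^n_{X^{(1)}/k}[-n]$, this gives $C_n \simeq F^* \Omega^n_{X^{(1)}/k}[-n]$ for $n \geq 0$ and $C_n = 0$ otherwise, yielding the functorial decomposition claimed in the corollary.

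There is no substantial obstacle beyond the content already packaged in Theorem~\ref{mainthm2}. The only mild subtlety is making rigorous sense of the weight decomposition of a derived object, which is standard by linear reductivity of $\mathbb{G}_m$: the datum of a $\mathbb{G}_m$-equivariant complex is equivalent to a $\mathbb{Z}$-graded complex, and this equivalence passes to derived categories without modification. Everything else is bookkeeping with the conjugate filtration.
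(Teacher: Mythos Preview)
Your proposal is correct and follows exactly the paper's approach: the paper's proof is the single sentence ``Since any $\mathbb{G}_m$-representation decomposes canonically according to $\mathbb{G}_m$-weights, we get the following,'' and you have simply spelled out in more detail how the weight decomposition, combined with the purity of the graded pieces from Theorem~\ref{mainthm2} and the Cartier identification of $\gr^{\mathrm{conj}}_i$, pins down each summand.
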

\begin{remark} As observed in \cite{Petrov25}, \cref{maincoro} implies that the de Rham complex decomposes for $F$-split varieties. We point out that in \cite[Thm.~1.2]{Petrov25}, Petrov proves a decomposition statement after pullback to $F_* W(\mathcal{O}_X)/p$, which proves the decomposition statement for quasi-$F$-split varieties as in \cite[Thm.~1.1]{Petrov25}. The main difference in our approach is to deduce \cref{maincoro} from the moduli description in \cref{mainthm1}, which is proven by using the notion of augmented $\mathbb{G}_a^\perf$-modules.
\end{remark}

\section{Preliminary reductions}

Let us fix $B \in \mathrm{Alg}_k$ and consider the functor $F^* F_* \mathrm{dR} \otimes_k B.$ 
By left Kan extension, we can extend it to a functor still denoted as $$F^* F_* \mathrm{dR} \otimes_k B \colon
(\mathrm{Sch}^{\mathrm{qsyn}}_k)^{\mathrm{op}} \to \mathrm{CAlg}(D(B)).$$ 
\begin{proposition}[Quasisyntomic descent]
\label{qsyndescent}
The above functor $F^* F_* \mathrm{dR} \otimes_k B$ \emph{satisfies quasisyntomic descent}.
\end{proposition}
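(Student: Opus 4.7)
The plan is to reduce quasisyntomic descent for $F^* F_* \mathrm{dR} \otimes_k B$ to the (by now standard) quasisyntomic descent for the ordinary derived de Rham cohomology functor $\mathrm{dR}_{-/k} \otimes_k B$, due to Bhatt--Morrow--Scholze, which is recovered in the semiperfect transmutation framework used later in the paper (cf.\ \cite{M22}).

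The first step is to express $F^*F_*\mathrm{dR}$ in terms of $\mathrm{dR}$ together with a Frobenius twist, in a way that is compatible with left Kan extension. For a smooth $k$-scheme $X$, the relative Frobenius $F_{X/k}\colon X \to X^{(1)}$ is finite flat (since $X/k$ is smooth in characteristic $p$), and the projection formula gives a canonical identification
$$F_{X/k}^* F_{X/k,*} \Omega^\bullet_{X/k} \;\cong\; \mathcal{O}_X \otimes_{\mathcal{O}_{X^{(1)}}} \Omega^\bullet_{X/k},$$
where $\mathcal{O}_X$ is regarded as an $\mathcal{O}_{X^{(1)}}$-algebra via $F_{X/k}$. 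Both factors on the right are natural in $X$ and extend via left Kan extension to (animated) qsyn $k$-algebras. The Frobenius tensor operation, being a left adjoint, commutes with the sifted colimits used in left Kan extension, so this description persists on $\mathrm{Sch}^{\mathrm{qsyn}}_k$.

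The second and main step is to verify descent on the basis of quasi-regular semiperfect $k$-algebras, which generates the qsyn topology. By the transmutation technique, the value of $\mathrm{dR}$ on a qrsp $R$ is concentrated in cohomological degree zero (essentially the divided-power envelope of a surjection from a polynomial algebra), and the conjugate filtration exhibits its graded pieces as discrete as well. The Frobenius extension of scalars preserves discreteness, so $F^*F_*\mathrm{dR}(R)$ is again a discrete sheaf on the qrsp site. Discrete qrsp sheaves automatically extend to qsyn sheaves, which gives the claimed descent statement after base change to $B$ (this base change commuting with everything in sight since $B$ is a discrete $k$-algebra).

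The main technical obstacle is ensuring that the identification of $F^*F_*\mathrm{dR}$ with the Frobenius tensor product above behaves well under the left Kan extension to qsyn algebras, i.e., that no higher $\mathrm{Tor}$ terms appear when tensoring along the Frobenius. This should follow from the standard structure of qrsp $k$-algebras (whose cotangent complex is concentrated in degree $-1$ and flat), reducing the Frobenius base change to an honest tensor product at the level of the discrete cohomology sheaves.
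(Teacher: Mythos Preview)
Your proposal has a genuine gap. The key sentence ``Discrete qrsp sheaves automatically extend to qsyn sheaves, which gives the claimed descent statement'' conflates two distinct properties: you have shown that $F^*F_*\mathrm{dR}$ takes \emph{discrete values} on qrsp algebras, but you have not shown that its restriction to qrsp is a \emph{sheaf} for the induced topology. Discreteness alone says nothing about the \v{C}ech--Alexander sequence being exact. The BMS unfolding principle does let one pass from a qrsp sheaf to a qsyn sheaf, but one must first verify the sheaf condition on qrsp, and then also check that the unfolded sheaf agrees with the left Kan extension from smooth algebras. Neither of these is addressed.

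Relatedly, the announced reduction to the known descent for $\mathrm{dR}$ is never actually carried out. The projection formula identification $F^*F_*\mathrm{dR}(R) \simeq R \otimes_{R^{(1)}} \mathrm{dR}(R)$ involves a tensor product over the base $R^{(1)}$ which itself varies with $R$, so descent for $\mathrm{dR}$ as a functor to $D(k)$ does not formally yield descent for this twisted version. Your final paragraph acknowledges this as ``the main technical obstacle,'' but the remark about qrsp cotangent complexes being flat in degree $-1$ does not resolve it: that controls Tor-amplitude of the graded pieces, not the compatibility of totalizations with Frobenius base change.

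For comparison, the paper's proof proceeds differently. It first reduces from $B$ to $k$ using coconnectivity, then uses the conjugate filtration to reduce to proving qsyn descent for each graded piece $S \mapsto \wedge^i \mathbb{L}_{S/k} \otimes_{S,\varphi_S} S$. For $i=1$ this is done by the transitivity triangle for $k \to R \to S^\bullet$: the ``absolute'' term satisfies descent by flat descent, and the ``relative'' term is shown to have vanishing totalization by base-changing along the cover $R \to S$ so that the cosimplicial diagram becomes split (following the idea of \cite[Thm.~3.1]{BMS19}). Higher $i$ are handled by filtering $\wedge^i$ of the triangle. This is precisely the missing verification of the sheaf condition that your sketch elides.
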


\begin{proof}
By using the conjugate filtration on derived de Rham cohomology, and the fact that for a quasi-syntomic $k$-algebra $S$, the contangent complex $\mathbb{L}_{S/k}$ as an $S$-module has Tor amplitude in cohomological degrees $[-1,0]$, we deduce that the functor $F^* F_* \mathrm{dR} \otimes_k B $ is coconnective. Since totalizations of coconnective objects commute with filtered colimits, to prove descent for $F^* F_* \mathrm{dR} \otimes_k B$, it suffices to prove descent for $F^* F_* \mathrm{dR}$ (as $k$ is a field, $B$ is a filtered colimit of finite dimensional $k$-vector spaces). So without loss of generality, we shall assume that $B\coloneqq k$ throughout the proof.

To this end, since conjugate filtration is increasing and exhaustive, by commuting totalizations of connective objects with filtered colimits, it would be enough to prove that the functor 
\begin{equation}\label{descentforcotangent}
 (\mathrm{Sch}_k^{\mathrm{Aff,qsyn}})^{\mathrm{op}} \ni S \mapsto \wedge^i\mathbb{L}_{S/k} \otimes_{S, \varphi_S}S \in D(k)   
\end{equation}satisfies quasi-syntomic descent. We will argue for $i=1$ and explain how to deduce the $i>1$ case from it. Let $R \to S$ be a quasisyntomic cover. We will prove descent for this cover, i.e., we will show that there is a natural isomorphism
$$\mathbb{L}_{R/k} \otimes_{R, \varphi_R}  R \simeq \mathrm{Tot} (\mathbb{L}_{S^\bullet/k} \otimes_{S^\bullet, \varphi}S^\bullet),$$ where $S^\bullet$ denotes the cosimplicial ring obtained by taking the Cech nerve of $R \to S$. Note that we have a fiber sequence in the category of cosimplicial objects of $D(k)$ of the form
\begin{equation}\label{imptriangle}
 \mathbb{L}_{R/k} \otimes_R  S^\bullet\otimes_{S^\bullet, \varphi}S^\bullet \to \mathbb{L}_{S^\bullet/k} \otimes_{S^\bullet, \varphi}S^\bullet \to  \mathbb{L}_{S^\bullet/R} \otimes_{S^\bullet, \varphi}S^\bullet.
\end{equation}
By faithfully flat descent, 
$$\mathrm{Tot} (\mathbb{L}_{R/k} \otimes_R  S^\bullet\otimes_{S^\bullet, \varphi}S^\bullet) \simeq \mathrm{Tot} (\mathbb{L}_{R/k} \otimes_{R, \varphi_R}  R\otimes_{R}S^\bullet )\simeq \mathbb{L}_{R/k} \otimes_{R, \varphi_R}  R.$$Therefore, in order to prove descent, we need to show that 
\begin{equation}\label{eq879}
 \mathrm{Tot}(\mathbb{L}_{S^\bullet/R} \otimes_{S^\bullet, \varphi}S^\bullet) \simeq 0  . 
\end{equation}{}
Note that for each $[n] \in \Delta,$ $S^{[n]}$ appearing in $S^\bullet$ is quasisyntomic, and thus $\mathbb{L}_{S^{[n]}/k}$ has Tor amplitiude in cohomological degrees $[-1,0]$ as $S^{[n]}$-module. By using the transitivity triangle for the cotangent complex associated to the maps $k \to R \to S^{[n]}$, it follows that each term $\mathbb{L}_{S^{[n]}/R} \otimes_{S^{[n]}, \varphi} S^{[n]}$ is concentrated in cohomological degrees $[-2,0]$. To prove \cref{eq879}, it suffices to show that
$\mathrm{Tot}(\mathbb{L}_{S^\bullet/R} \otimes_{S^\bullet, \varphi}S^\bullet) \otimes_{R}S \simeq 0.$ Let $S \to T^\bullet$ denote the base change of $R \to S^\bullet$ along $R \to S.$ Since $R \to S$ is flat, one can write $S$ as a filtered colimit of finite free $R$-modules. By commutating totalization of (cohomologically) uniformly bounded below objects with filtered colimits and using base change properties of the cotangent complex, we have 
$$\mathrm{Tot}(\mathbb{L}_{S^\bullet/R} \otimes_{S^\bullet, \varphi}S^\bullet) \otimes_{R}S \simeq \mathrm{Tot} (\mathbb{L}_{T^\bullet/S}\otimes_{T^\bullet, \varphi} T^\bullet).$$
Note that $S \to T^\bullet$ can be viewed as a split augmented cosimplicial $S$-algebra since it is obtained from Cech nerve of $S \to S \otimes_R S$ which admits a section. 
By \cite[Example 3.11]{Mat16},
this implies that $$\mathrm{Tot} (\mathbb{L}_{T^\bullet/S}\otimes_{T^\bullet, \varphi} T^\bullet) \simeq \mathbb{L}_{S/S}\otimes_{S, \varphi_S} S \simeq 0,$$ as desired. This finishes the proof in the case $i=1$. For $i >1$, one argues similarly by using the graded pieces of the finite filtration on $\wedge^i \mathbb{L}_{S^\bullet/k} \otimes_{S^\bullet, \varphi}S^\bullet$ induced from \cref{imptriangle} (see \cite[p.~213]{BMS19}).
\end{proof}

\begin{remark}
    The argument for proving the descent statement for \cref{descentforcotangent} essentially follows the same idea in \cite[Thm.~3.1]{BMS19}, with the technical modification of using split cosimplicial objects to avoid talking about ``homotopy of cosimplicial objects" in the $\infty$-categorical set up as in our situation. A flat descent statement for the functor in \cref{descentforcotangent} appears in \cite[Lem.~5.10]{Petrov25}.
\end{remark}{}


Let $\mathrm{QRSP}_k$ denote the cateory of semi-perfect $k$-algebras which are quasi-regular over $k$.\footnote{From now on we will call them \emph{qrsp over $k$}.} They form a basis for the quasisyntomic topology on $\mathrm{Sch}^{\mathrm{qsyn}}_k$. 
For more discussions on this notion, we refer the readers to \cite[\S 4]{BMS19}.

\begin{proposition}
\label{discrete value on QRSP}
If $S \in \mathrm{QRSP}_k$, then $F^* F_* \mathrm{dR}(S) \otimes_k B$ is a discrete ring.  
\end{proposition}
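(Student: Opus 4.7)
The plan is to exploit the conjugate filtration on derived de Rham cohomology and verify that each graded piece stays discrete after applying $F^*F_*$ and $-\otimes_k B$. Recall that $\dR(S)$ carries an increasing, exhaustive conjugate filtration $\Fil^{\mathrm{conj}}_\bullet \dR(S)$ with graded pieces $\gr^{\mathrm{conj}}_i \dR(S) \simeq \wedge^i \mathbb{L}_{S/k}[-i]$. On smooth affines, $F^*F_*$ amounts to base change along the relative Frobenius $F_{S/k} \colon S^{(1)} \to S$, so by left Kan extension the functor $F^*F_* \dR$ inherits an analogous filtration whose graded pieces are the derived base changes $\wedge^i \mathbb{L}_{S^{(1)}/k}[-i] \otimes^L_{S^{(1)}} S$.

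The key input will be that, for $S$ qrsp (hence $S^{(1)}$ qrsp as well), the cotangent complex $\mathbb{L}_{S^{(1)}/k}$ sits in cohomological degree $-1$ with flat underlying $S^{(1)}$-module. By the divided-power/exterior decalage $\wedge^i(N[1]) \simeq \Gamma^i(N)[i]$, each $\wedge^i \mathbb{L}_{S^{(1)}/k}[-i]$ is then a flat $S^{(1)}$-module concentrated in degree $0$. Therefore the derived base change $\wedge^i \mathbb{L}_{S^{(1)}/k}[-i] \otimes^L_{S^{(1)}} S$ reduces to the classical tensor product and stays in degree $0$, so every $\gr^{\mathrm{conj}}_i F^*F_* \dR(S)$ is discrete. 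Induction along the filtration then shows each $\Fil_n^{\mathrm{conj}} F^*F_* \dR(S)$ is discrete (being an iterated extension of discrete modules), and taking the exhaustive filtered colimit yields the discreteness of $F^*F_* \dR(S)$. Finally, since $k$ is a field, $-\otimes_k B$ is exact and preserves discreteness.

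The main technical subtlety will be the derived tensor product along the generally non-flat relative Frobenius $F_{S/k}$: a priori higher Tor could appear and spoil discreteness. This is exactly what is circumvented by the qrsp hypothesis, which yields the flatness of the conjugate graded pieces over $S^{(1)}$. A secondary point to verify is the compatibility of left Kan extension with the conjugate filtration, which should be routine since both are built from colimit-preserving operations on polynomial algebras.
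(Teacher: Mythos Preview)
Your proposal is correct and follows essentially the same approach as the paper's proof: both invoke the conjugate filtration and the fact that for $S$ qrsp the cotangent complex $\mathbb{L}_{S/k}$ is a flat module in cohomological degree $-1$, which forces each graded piece (and hence the colimit) to be discrete. You simply spell out more of the details—the d\'ecalage $\wedge^i(N[1]) \simeq \Gamma^i(N)[i]$, the preservation of discreteness under $-\otimes^L_{S^{(1)}} S$ via flatness, and the exactness of $-\otimes_k B$—whereas the paper compresses this into a one-line citation.
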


\begin{proof}
This follows from the conjugate filtration on (Frobenius twisted) de Rham cohomology (see \cite[Proposition 3.5]{Bha12}) 
and the fact that $\mathbb{L}_{S/k}$ is isomorphic to a flat $S$-module concentrated in cohomological degree $-1$.  
\end{proof}

\begin{corollary}
The space of automorphisms $\mathrm{Aut}(F^* F_* \mathrm{dR} \otimes_k B)$ is discrete,
and similarly for the space of endomorphisms $\mathrm{End}(F^* F_* \mathrm{dR} \otimes_k B)$.
\end{corollary}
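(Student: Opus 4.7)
The plan is to combine the quasi-syntomic descent of \Cref{qsyndescent} with the pointwise discreteness on QRSP algebras from \Cref{discrete value on QRSP}.

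First, I would use that $F^*F_*\dR\otimes_kB$ is a quasi-syntomic sheaf (by \Cref{qsyndescent}) and that $\mathrm{QRSP}_k$ forms a basis for the quasi-syntomic topology, so that both the endomorphism and automorphism spaces are computed after restriction to $\mathrm{QRSP}_k$:
\[
\mathrm{End}(F^*F_*\dR\otimes_kB)\simeq\mathrm{End}\bigl((F^*F_*\dR\otimes_kB)\big|_{\mathrm{QRSP}_k}\bigr),
\]
and similarly for $\mathrm{Aut}$.

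Next, by \Cref{discrete value on QRSP}, for each $S\in\mathrm{QRSP}_k$ the value $F^*F_*\dR(S)\otimes_kB$ is a discrete commutative $B$-algebra, so the restricted functor factors through the heart $\mathrm{Alg}_B\subset\mathrm{CAlg}(D(B))$. From this I would conclude that an endomorphism of the restricted functor is just a compatible family of maps of discrete commutative $B$-algebras, which forms a set; the automorphism space is a subset of this and is therefore also discrete.

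The hard part will be justifying that the $\infty$-categorical endomorphism space in $\mathrm{Fun}(\mathrm{QRSP}_k^{\mathrm{op}},\mathrm{CAlg}(D(B)))$ agrees with the $1$-categorical one in $\mathrm{Fun}(\mathrm{QRSP}_k^{\mathrm{op}},\mathrm{Alg}_B)$: a priori, mapping spaces in $\mathrm{CAlg}(D(B))$ between two discrete commutative rings can carry higher homotopy, governed by $\mathrm{Ext}$-groups of the cotangent complex. This is automatic if one interprets $\mathrm{CAlg}$ as the $\infty$-category of animated commutative rings (where the mapping space between two discrete objects is always the set of ordinary ring maps); otherwise, one would argue directly using the conjugate filtration on $F^*F_*\dR(S)\otimes_kB$ together with the flatness of $\mathbb{L}_{S/k}[-1]$ for qrsp $S$ to check that the relevant higher $\mathrm{Ext}$-groups vanish.
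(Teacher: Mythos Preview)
Your approach is essentially the same as the paper's: reduce to $\mathrm{QRSP}_k$ via \Cref{qsyndescent} (the paper phrases this as ``considering right extensions''), and then invoke \Cref{discrete value on QRSP}. The subtlety you flag about mapping spaces in $\mathrm{CAlg}(D(B))$ between discrete algebras is exactly the point the paper outsources to the proof of \cite[Lemma~3.3]{LM24}; your two suggested resolutions (animated interpretation, or a direct check via the conjugate filtration and flatness of $\mathbb{L}_{S/k}[-1]$) are both reasonable substitutes for that citation.
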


\begin{proof}
This follows from
\Cref{qsyndescent}, \Cref{discrete value on QRSP} by considering right extensions and using as in the proof of \cite[Lemma 3.3]{LM24}.
\end{proof}

In the proof of \Cref{discrete value on QRSP},
we have used the conjugate filtration on $p$-adic derived de Rham cohomology due to
Bhatt. By pulling back along Frobenius, we have an induced filtration on Frobenius twisted de Rham cohomology.

\begin{notation}
\label{conjugate filtration}
Let us denote the Frobenius pullback of the conjugate filtration as
\[
\Fil^{\conj}_{\bullet}(F^* F_* \mathrm{dR} \otimes_k B) \coloneqq F^*(\Fil^{\conj}_{\bullet}F_* \mathrm{dR}) \otimes_k B.
\]
If there is no risk of confusion, we still refer to the above as the conjugate filtration
on $F^* F_* \mathrm{dR} \otimes_k B$.
\end{notation}

Therefore we may view $F^* F_* \mathrm{dR} \otimes_k B$ as a filtered $\Fil^{\conj}_0 = \mathcal{O} \otimes_k B$-algebra.

\begin{proposition}
\label{automorphism is linear over Fil0}
Any endomorphism $\sigma \in \mathrm{End}(F^*F_*\mathrm{dR} \otimes_k B)$ must preserve the conjugate filtration.
Any automorphism of $\mathcal{O} \otimes_k B$ is identity,
therefore any automorphism of $F^*F_*\mathrm{dR} \otimes_k B$ must be linear over $\mathcal{O} \otimes_k B$.
\end{proposition}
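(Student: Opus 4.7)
The plan is to prove the two assertions of the proposition in turn, then combine them.

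For the first assertion, on a smooth $k$-scheme $X$ the Cartier isomorphism $\gr^{\conj}_i F_{X/k,*}\Omega^*_{X/k} \cong \Omega^i_{X^{(1)}/k}[-i]$ identifies the conjugate filtration on $F_{X/k,*}\Omega^*_{X/k}$ with the cohomological truncation filtration $\tau^{\le i}$. Since $k$ is perfect and $X$ is smooth, the relative Frobenius $F_{X/k}\colon X \to X^{(1)}$ is finite flat, so $F_{X/k}^*$ is exact and commutes with truncations; hence $\Fil^{\conj}_i(F^*F_*\dR_X \otimes_k B) = \tau^{\le i}(F^*F_*\dR_X \otimes_k B)$. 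Any endomorphism $\sigma$ in the derived category is functorially compatible with cohomological truncations, so $\sigma$ preserves the conjugate filtration.

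For the second assertion, let $\tau$ be a $B$-algebra automorphism of the functor $R \mapsto R \otimes_k B$ on smooth $k$-algebras. Naturality under the maps $k[t] \to R$, $t \mapsto r$, shows that $\tau$ is determined by $p(t) := \tau_{k[t]}(t) \in B[t]$, via $\tau_R(r) = p(r)$. Naturality under the comultiplications $k[t] \to k[t_1, t_2]$ sending $t \mapsto t_1 + t_2$ and $t \mapsto t_1 t_2$ yields
\[
p(t_1 + t_2) = p(t_1) + p(t_2), \qquad p(t_1 t_2) = p(t_1)p(t_2).
\]
Additivity together with $\charac(B) = p$ and $p(0) = 0$ forces $p(t) = \sum_{i \ge 0} a_i t^{p^i}$ for some $a_i \in B$, since $\binom{n}{j} \equiv 0 \pmod p$ for every $0 < j < n$ precisely when $n$ is a power of $p$. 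Multiplicativity and $p(1) = 1$ then force $\{a_i\}$ to be a complete orthogonal system of idempotents in $B$. Finally, surjectivity of $\tau$ produces $q(t) = \sum_n c_n t^n \in B[t]$ with $q(p(t)) = t$; using $p(t)^n = \sum_i a_i t^{np^i}$ (valid because the $a_i$ are orthogonal idempotents), the coefficient of $t$ in $q(p(t))$ gives $c_1 a_0 = 1$, whence $a_0 = a_0^2 c_1 = a_0 c_1 = 1$, and then $\sum_i a_i = 1$ combined with orthogonality forces $a_i = 0$ for $i \ge 1$. Hence $p(t) = t$ and $\tau = \id$.

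Combining the two steps proves the proposition: every automorphism of $F^*F_*\dR \otimes_k B$ preserves $\Fil^{\conj}_0 = \mathcal{O} \otimes_k B$ by the first step, restricting to an automorphism of $\mathcal{O} \otimes_k B$ that must be the identity by the second step; hence the original automorphism is $\mathcal{O} \otimes_k B$-linear. The main subtlety is the characteristic-$p$ combinatorics of the second step, and in particular the use of surjectivity: the $B$-algebra endomorphisms $t \mapsto t^{p^n}$ of $B[t]$ with $n \ge 1$ satisfy all the functorial and additive/multiplicative constraints, and are only excluded by failing to be surjective.
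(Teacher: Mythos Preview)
Your argument is correct and follows the same strategy as the paper, which cites \cite[Lemma~5.3]{LM24} for the first assertion and, for the second, simply asserts that a ring scheme automorphism of $\mathbb{A}^1_B$ must be the identity. You usefully spell out the latter: the reduction to additive $p$-polynomials with orthogonal idempotent coefficients, and the use of surjectivity to force $a_0 = 1$, is exactly the content the paper leaves implicit (and your closing remark about $t \mapsto t^{p^n}$ matches the paper's later observation that the endomorphism monoid of $\mathcal{O} \otimes_k B$ contains powers of Frobenius).

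One small technical refinement on your first step: the equality $\Fil^{\conj}_i = \tau^{\le i}$ holds for $F_{X/k}^* F_{X/k,*}\Omega^*_{X/k}$ as a complex of \emph{sheaves} on $X$, but the functor $F^*F_*\dR$ lands in $\mathrm{CAlg}(k)$, so one is implicitly taking derived global sections, and $R\Gamma(X,-)$ does not commute with $\tau^{\le i}$ when $X$ is not affine. The fix is immediate: restrict to smooth \emph{affine} $X$, where your argument goes through verbatim, and note that this suffices since both the functor and its conjugate filtration are Zariski sheaves; alternatively, naturality of $\sigma$ under open immersions promotes each $\sigma_X$ to a morphism of complexes of sheaves on $X$, where your truncation argument applies directly.
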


\begin{proof}
The first sentence follows from the same proof of \cite[Lemma 5.3]{LM24}.
For the second sentence: any automorphism of $\mathcal{O} \otimes_k B$ induces an automorphism of $B[x]$
by applying to the algebra $k[x]$. By functoriality, this would necessarily be determined by an automorphism of $\mathbb{A}^1_B$
as a ring scheme, hence must be identity. Since for any $k$-algebra $R$, the algebra $R \otimes_k B$ is generated by
subalgebras of the form $f(k[x]) \otimes_k B$ for varying maps $f \colon k[x] \to R$,
we conclude again by functoriality that the induced automorphism on $R \otimes_k B$ is identity.
\end{proof}

\section{Augmented $\mathbb{G}_a^{\mathrm{perf}}$-modules and transmutation}\label{sec3}

Below, we fix a $k$-algebra $B$ and introduce some definitions that will be required.

\begin{definition}
\label{Gaperf definition}
We denote $\mathbb{G}_a^{\mathrm{perf}} \coloneqq \Spec(\mathbb{F}_p[x^{1/p^\infty}])$
the ring scheme representing the functor sending any $\mathbb{F}_p$-algebra $A$
to its inverse limit perfection $A^{\flat} \coloneqq \lim_{a \mapsto a^p} A$.
By base change to $\Spec(B)$, we again obtain a ring scheme denoted by $\mathbb{G}_{a,B}^{\mathrm{perf}}$.
When there is no risk of confusion, we will omit the subscript $B$
and still denote the latter by $\mathbb{G}_a^{\mathrm{perf}}$.
Its underlying scheme structure is given by $\mathrm{Spec}(B[x^{1/p^\infty}])$.  
\end{definition}{}

\begin{definition}[{\cite[Definition 2.2.5]{M22}}]
A $\mathbb{G}_a^{\mathrm{perf}}$-module is a $\mathbb{G}_a^{\mathrm{perf}}$-module 
object in the category $\mathrm{Aff}_B$ of affine schemes over $\Spec(B)$.
\end{definition}

\begin{remark}[{\cite[Remark 2.2.8]{M22}}]
\label{interprete Gaperf module}
Let us view $\mathbb{G}_a^{\mathrm{perf}}$ as a monoid scheme. Then the category of
affine schemes over $\Spec(B)$ together with an action of this monoid is anti-equivalent to
$\mathbb{N}[1/p]$-graded $B$-algebras.
The category of abelian group objects in the former category is then anti-equivalent to
$\mathbb{N}[1/p]$-graded bi-commutative Hopf-$B$-algebras.
Finally the category of $\mathbb{G}_a^{\mathrm{perf}}$-modules is anti-equivalent to $\mathbb{N}[1/p]$-graded
bi-commutative Hopf $B$-algebras
which satisfy a compatibility (analogous to the diagram in \cite[Remark 2.1.8]{M22})
between summation on $\mathbb{G}_a^{\mathrm{perf}}$ and the graded Hopf-$B$-algebra structure.
As a consequence of this compatibility, one can show that the degree $0$ component, which is a priori an arbitrary
Hopf-$B$-algebra, is actually given by $B$ (see \cite[Proposition 2.2.9]{M22}).
\end{remark}

\begin{definition}[Augmented $\mathbb{G}_a^{\mathrm{perf}}$-module]
\label{augmodule}
An augmented $\mathbb{G}_a^{\mathrm{perf}}$-module is a $\mathbb{G}_a^{\mathrm{perf}}$-module $M$
together with a map $d\colon M \to \mathbb{G}_a^{\mathrm{perf}}$ of $\mathbb{G}_a^{\mathrm{perf}}$-modules. 
\end{definition}

\begin{remark}
 The above definition appeared in {\cite[Definition 2.2.14]{M22}} where it was called pointed $\mathbb{G}_a^\perf$-module.
\end{remark}{}

 We recall the following construction, which is a generalization of \cite[\S~3.4]{M22} to an arbirary base ring $B$ of characteristic $p$. This construction was called ``unwinding" in \cite{M22}, which is called ``semiperfect transmutation" or simply ``transmutation" here, following the more recent terminology in \cite{fg}. Morally speaking, these constructions allow one to build a cohomology theory out of a \emph{single} (!) highly structured object, namely an augmented $\mathbb{G}_a^\perf$-module (as in \cite{M22}), or a ring stack (as in \cite{fg}, \cite{LM24}).

\begin{construction}[Semiperfect transmutation]
\label{sections}
  Let $\mathscr{P}I$ denote the category of pairs $(S,I)$ where $S$ is a perfect $k$-algebra and $I$ is an ideal of $S$. 
  Let $d \colon M \to \mathbb{G}_a^{\mathrm{perf}}$ be an augmented $\mathbb{G}_a^{\mathrm{perf}}$-module over $B$.
  Below, out of the data $(M, d)$, we shall construct a functor denoted as
  \[
  \mathrm{Tm}(d) \colon \mathscr{P}I \to \mathrm{Alg}_B.
  \]
  Let $(S, I) \in \mathscr{P}I$, we first construct an affine scheme $\mathrm{Sec}_{(S,I)}(d)$ over 
  $\mathrm{Spec}(S\otimes_k B)$ that roughly speaking, is the moduli of \emph{sections of $d\colon M \to \mathbb{G}_a^{\mathrm{perf}}$ over the ideal} $I$. 
  More precisely, note that we have a functor 
  \[
  \mathrm{Aff}^{\mathrm{op}}_{S \otimes_k B} \to \mathrm{Sets}
  \]
  defined by 
  \[
  \mathrm{Aff}^{\mathrm{op}}_{S \otimes_k B}\ni \mathrm{Spec}(A) \mapsto  \mathrm{Eq} \left(\mathrm{Hom}_S(I, M(A)) \,\, \substack{\xrightarrow{d} \\ \xrightarrow{*}}\,\,  \mathrm{Hom}_S(I, A^\flat)\right).
  \]
  Recall that $A^\flat \coloneqq \mathbb{G}_a^{\mathrm{perf}}(A)$. Let us explain the above definition:
  Applying the limit perfection functor to the natural maps
  $S \to S\otimes_k B \to A$, we get a composite map $S \to A^\flat$.
  Therefore we get a natural $S$-module structure on the $A^\flat$-module $M(A)$. The map $*$ denotes the constant map to the element $* \in \mathrm{Hom}_S(I, A^\flat)$ given by the composition $I \subseteq S \to A^\flat$.
  In other words, we are looking at the set of dashed arrows that makes the diagram 
  \begin{center}
      \begin{tikzcd}
                    & I \arrow[ld, dotted, bend right] \arrow[d, "*"] \\
M(A) \arrow[r, "d"] & A^\flat                                        
\end{tikzcd}
  \end{center}
  commutative. By the adjoint functor theorem, the functor $\mathrm{Aff}^{\mathrm{op}}_{S \otimes_k B} \to \mathrm{Sets}$ is representable by an affine scheme over $\mathrm{Spec}\, (S \otimes_k B)$ which we define to be $\mathrm{Sec}_{(S,I)}(d)$. The functor $\mathrm{Tm}(d)$ is defined by 
$$ \mathscr{P}I \ni (S, I) \mapsto \Gamma (\mathrm{Sec}_{(S,I)}(d), \mathcal{O}).$$ 
By definition, this construction is covariantly functorial in the pair $(S, I)$
and  contravariantly functorial in the augmented $\mathbb{G}_a^{\mathrm{perf}}$-module $(M, d)$.
Hence the above construction produces a functor
\[
\mathrm{Tm}\colon \Big(\mathbb{G}_a^{\mathrm{perf}}-\mathrm{Mod}_*\Big)^{\op} \to \mathrm{Fun}(\mathscr{P}I, \mathrm{Alg}_B),
\] by the assignment $d \mapsto \mathrm{Tm}(d),$
where the source denotes the opposite category of the category of augmented $\mathbb{G}_a^\mathrm{perf}$-modules. 

\end{construction}

\begin{example}\label{useexample}
\label{transmutation on perfect}
The category $\mathscr{P}I$ contains the category $\mathrm{Perf}_k$ of perfect $k$-algebras
via $S \mapsto (S, (0))$.
The functor of points of $\mathrm{Sec}_{(S,(0))}(d)$ is given by the singleton, therefore
we have $\mathrm{Tm}(d)(S, (0)) \simeq S \otimes_k B$ for any augmented $\mathbb{G}_a^\perf$-module $(M, d)$.
\end{example}

\begin{example}
\label{transmutation on generator}
The category $\mathscr{P}I$ has a particular object, which will be of interest to us later, given by 
$(S,I) \coloneqq (k[x^{1/p^\infty}], x)$. We will calculate the value of $\mathrm{Tm}(d)$ evaluated at this object.
Since the ideal $I$ is free rank $1$ as an $S$-module, the scheme $\mathrm{Sec}_{(S,I)}(d)$
represents the functor $\mathrm{Aff}^{\mathrm{op}}_{S \otimes_k B}\ni \mathrm{Spec}(A) \mapsto \mathrm{Eq} \left(M(A) \,\, \substack{\xrightarrow{d} \\ \xrightarrow{*}}\,\,  A^\flat\right)$.
Unwinding definitions, this amounts to finding dashed arrows of algebras making the diagram below commutative:
\[
\xymatrix{
B[x^{1/p^\infty}] \ar[r]^-{\simeq} \ar[d]^-{d^*} & S \otimes_k B \ar[d] \\
\mathcal{O}(M) \ar@{-->}[r] & A.
}
\]
Therefore we see that $\mathrm{Sec}_{(S,I)}(d) \simeq M$ with structure map to $\Spec(S \otimes_k B)$
given by $d$, and hence we have an identification
\[
\xymatrix{
\mathrm{Tm}(d)(k[x^{1/p^\infty}], (0)) \ar[d]_-{\mathrm{Tm}(d)(x \mapsto x)} \ar[r]^-{\simeq} & 
\mathcal{O}(\mathbb{G}_{a}^{\mathrm{perf}}) \ar[d]^-{d^*} \\
\mathrm{Tm}(d)(k[x^{1/p^\infty}], (x)) \ar[r]^-{\simeq} & \mathcal{O}(M)
}
\]
compatible with the identification in \Cref{transmutation on perfect}.
\end{example}

\begin{remark}
\label{Frob for augmented Gaperf mod}
Using \Cref{transmutation on generator} and the functoriality of \cref{sections} applied to the map of pairs
$F \colon (k[x^{1/p^\infty}], x) \to (k[x^{1/p^\infty}], x)$ which sends
$x \mapsto x^p$, we see that for any $(M, d) \in \mathbb{G}_a^{\mathrm{perf}}-\mathrm{Mod}_*$, there is a natural map $\mathcal{O}(M) \xrightarrow{\text{``Frob''}} \mathcal{O}(M)$ 
which fits into a natural commutative diagram of $B$-algebras:
\[
\xymatrix{
 \mathcal{O}(M) \ar[r]^-{\text{``Frob''}} & \mathcal{O}(M) \\
B[x^{1/p^\infty}] \ar[u] \ar[r]^-{x \mapsto x^p} & B[x^{1/p^\infty}] .\ar[u].
}
\]
\end{remark}

\begin{remark}
\label{why quasi-ideal}
Let $(M, d)$ be an augmented $\mathbb{G}_{a, B}^{\mathrm{perf}}$-module.
Unraveling definitions, we see that the identifications in 
\Cref{transmutation on perfect} and \Cref{transmutation on generator}
make the following diagram commute:
\[
\xymatrix{
\mathcal{O}(M) \ar[r]^-{\simeq} \ar[d]^-{\mathrm{act}^*} &
\mathrm{Tm}(d)(k[x^{1/p^\infty}], (x)) \ar[r]^-{\mathrm{Tm}(d)(x \mapsto x \otimes x)} & 
\mathrm{Tm}(d)(k[x^{1/p^\infty}] \otimes_k k[x^{1/p^\infty}], (1 \otimes x)) \\
\mathcal{O}(\mathbb{G}_{a, B}^{\mathrm{perf}} \times_{\Spec(B)} M) \ar[r]^-{\simeq}
& (k[x^{1/p^\infty}] \otimes_k B) \otimes_B \mathcal{O}(M) \ar[r]^-{\simeq} &
\mathrm{Tm}(d)(k[x^{1/p^\infty}], (0)) \otimes_B \mathrm{Tm}(d)(k[x^{1/p^\infty}], (x)). \ar[u]
}
\]
\end{remark}

\begin{example}
\label{transmutation of id}
The category $(\mathbb{G}_a^{\mathrm{perf}}-\mathrm{Mod}_*)^{\op}$ has initial object given by
$\mathbb{G}_a^{\mathrm{perf}} \xrightarrow{\mathrm{id}} \mathbb{G}_a^{\mathrm{perf}}$.
Since $d \colon M(A) \to A^{\flat}$ in this case is an isomorphism,
the functor of $\mathrm{Sec}_{(S,I)}(d)$ is given by singleton.
Therefore we have $\mathrm{Tm}(\id)(S, I) = S \otimes_k B$. 
\end{example}

\begin{remark}[Base change]
\label{base change remark}
The transmutation process also satisfies two base change properties:
let $S \to T$ be a map of perfect $k$-algebras such that $I \otimes_S T \simeq I \cdot T$,
then for any ideal $I \subset S$ and $d \in (\mathbb{G}_a^{\mathrm{perf}}-\mathrm{Mod}_*)^{\op}$, we have
$\mathrm{Tm}(d)(T, I \cdot T) \simeq \mathrm{Tm}(d)(S, I) \otimes_S T$.
Let $B \to B'$ be a map of $k$-algebras, then for any $d \in (\mathbb{G}_{a, B}^{\mathrm{perf}}-\mathrm{Mod}_*)^{\op}$ by base
change we obtain a $d' \in (\mathbb{G}_{a, B'}^{\mathrm{perf}}-\mathrm{Mod}_*)^{\op}$,
and we have a natural equivalence $\mathrm{Tm}(d') \simeq \mathrm{Tm}(d) \otimes_B B'$ as functors $  \mathscr{P}I \to \mathrm{Alg}_{B'}$.
\end{remark}

\begin{definition}
\label{defoft}
Note that there is a functor $\mathrm{QRSP}_k \to \mathscr{P}I$ defined by 
\[
\mathrm{QRSP}_k \ni S \mapsto (S^\flat, \mathrm{Ker}(S^\flat \twoheadrightarrow S)).
\]
This induces a functor also denoted as
\[
\mathrm{Tm}\colon \Big(\mathbb{G}_a^{\mathrm{perf}}-\mathrm{Mod}_*\Big)^{\op} \to \mathrm{Fun}(\mathrm{QRSP}_k, \mathrm{Alg}_B).
\]
\end{definition}

\begin{example}
Consider the quasi-ideal $\mathbb{G}_a^{\mathrm{perf}} \xrightarrow{\mathrm{id}} \mathbb{G}_a^{\mathrm{perf}}$.
Then by \Cref{transmutation of id}, we have a natural equivalence of functors 
$\mathrm{Tm}(\id) \simeq (-)^{\flat} \otimes_k B \colon \mathrm{QRSP}_k \to \mathrm{Alg}_B$.
Let us denote this functor by $\mathfrak{G}$.
\end{example}{}

Since $\mathbb{G}_a^{\mathrm{perf}} \xrightarrow{\mathrm{id}} \mathbb{G}_a^{\mathrm{perf}}$ is
the initial object in $(\mathbb{G}_a^{\mathrm{perf}}-\mathrm{Mod}_*)^{\op}$.
By the above example, it follows that \Cref{defoft} naturally lifts to a functor
\begin{equation}\label{equ}
  \mathrm{Tm}\colon \Big(\mathbb{G}_a^{\mathrm{perf}}-\mathrm{Mod}_*\Big)^{\op} \to \mathrm{Fun}(\mathrm{QRSP}_k, \mathrm{Alg}_B)_{\mathfrak{G}/}, \end{equation}
where the latter denotes the under category associated with $\mathfrak{G}$.

\section{Nilpotent quasi-ideals in $\mathbb{G}_a^\perf$}
In this section, we restrict attention to a special class of augmeted $\mathbb{G}_a^\perf$-modules, which we call nilpotent quasi-ideals in $\mathbb{G}_a^\perf$, for which the transmutation construction will be particularly well-behaved. The notion of a quasi-ideal is due to Drinfeld \cite{drinfeld}.
\begin{definition}
\label{otime}
We define a full subcategory $\mathrm{Fun}(\mathrm{QRSP}_k, \mathrm{Alg}_B)^{\otimes}_{\mathfrak{G}/}$ of $\mathrm{Fun}(\mathrm{QRSP}_k, \mathrm{Alg}_B)_{\mathfrak{G}/}$ spanned by functors $F$ that satisfies the three conditions below.

\begin{enumerate}
    \item The map $\mathfrak{G}(S) \to F(S)$ is an isomorphism for every perfect ring $S$.

    \item The natural map $F\left(\frac{k[x^{1/p^\infty}]}{x} \right) \otimes_{F(k)} F(S) \xrightarrow{} F \left(\frac{S[x^{1/p^\infty}]}{x} \right)$ is an isomorphism for every perfect ring $S$.

    \item The natural map
    $F\left (\frac{S[x^{1/p^\infty}]}{x} \right) \otimes_{F(S)} F\left (\frac{S[x^{1/p^\infty}]}{x} \right) \xrightarrow{}F\left (\frac{S[x^{1/p^\infty}]}{x} \otimes_S \frac{S[x^{1/p^\infty}]}{x} \right)$ is an isomorphism for every perfect ring $S$.
\end{enumerate}{}
\end{definition}{}

It is worth pointing out that the functor $\mathfrak{G}$ itself does not satisfy either of
the conditions (2) and (3) above: Indeed we have 
$\mathfrak{G}(S[x^{1/p^\infty}]/(x)) = S[\![x^{1/p^\infty}]\!] \otimes_k B$ for any perfect $k$-algebra $S$,
so only a ``topological'' version of (2) and (3) holds true.
Below, we shall single out a certain full subcategory of $\mathbb{G}_a^{\mathrm{perf}}-\mathrm{Mod}_*$,
whose objects $d \colon M \to \mathbb{G}_a^{\mathrm{perf}}$ have transmutations $\mathrm{Tm}(d)$ satisfying the three conditions above. This leads us to the notion of \emph{nilpotent} quasi-ideals (see \cite[Def.~3.4.11]{M22}), where a certain nilpotency condition is used to discretise the ``topological" issue noted earlier.

\begin{definition}[Nilpotent quasi-ideals] 
A quasi-ideal in $\mathbb{G}_a^{\mathrm{perf}}$ is an augmented $\mathbb{G}_a^{\perf}$-module
$d: M \to \mathbb{G}_a^{\mathrm{perf}}$,
such that $d(y) x = d(x) y$ for all (scheme theoretic) points $x, y$ of $M$.

A nilpotent quasi-ideal in $\mathbb{G}_a^{\mathrm{perf}}$ is a quasi-ideal
$d\colon M \to \mathbb{G}_a^{\mathrm{perf}}$ such that the image of $x$ under the map 
$d^{*} \colon B[x^{1/p^\infty}] \to \Gamma (M, \mathcal{O})$ induced by $d$ is nilpotent.
We denote the corresponding full subcategory by 
$\mathcal{N}\mathrm{QID}-\mathbb{G}_a^{\mathrm{perf}} \subset \mathbb{G}_a^{\mathrm{perf}}-\mathrm{Mod}_*$.
\end{definition}

\begin{remark}
By using \Cref{transmutation on generator},
\Cref{why quasi-ideal} and diagrams before and inside \cite[Definition 3.2.10]{M22},
we see that if $\mathrm{Tm}(d)$ satisfies the conditions in \Cref{otime},
then the augmented module $d \colon M \to \mathbb{G}_a^{\perf}$ is necessarily
a quasi-ideal.
\end{remark}{}

\begin{remark}
\label{check nilpotence remark}
In the above definition, since $d$ is a map of $\mathbb{G}_a^{\mathrm{perf}}$-modules, the map 
$d^*\colon B[x^{1/p^\infty}] \to \Gamma (M, \mathcal{O})$ is a map of $\mathbb{N}[1/p]$-graded rings.
Note that since $1$ has degree zero and $d^*(x)$ has degree $1$, it follows that $d^*(x)$ 
is nilpotent if and only if $1 - d^*(x)$ is a unit.  
\end{remark}

\begin{proposition}
\label{Tm of nilp satisfies axioms}
The functor from \cref{equ} factors to give a functor 
$$\mathrm{Tm}\colon  \left( \mathcal{N}\mathrm{QID}-\mathbb{G}_a^{\mathrm{perf}} \right)^{\mathrm{op}}
\to \mathrm{Fun}(\mathrm{QRSP}_k, \mathrm{Alg}_B)^{\otimes}_{\mathfrak{G}/}. $$
\end{proposition}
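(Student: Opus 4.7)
The plan is to verify conditions (1), (2), and (3) of \Cref{otime} for $\mathrm{Tm}(d)$ when $d \colon M \to \mathbb{G}_a^{\mathrm{perf}}$ is a nilpotent quasi-ideal. Condition (1) is an immediate consequence of \Cref{transmutation on perfect}: for any perfect $k$-algebra $S$, the object $(S, (0))$ already computes $\mathrm{Tm}(d)(S) \simeq S \otimes_k B = \mathfrak{G}(S)$, and this holds for every $(M, d)$, not just nilpotent quasi-ideals. The real content of the proposition is conditions (2) and (3), for which both the nilpotency assumption and the quasi-ideal property are required.

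The key computation for (2) is the identification
\[
\mathrm{Tm}(d)(S[x^{1/p^\infty}]/x) \simeq \mathcal{O}(M) \otimes_B (S \otimes_k B)
\]
for any perfect $k$-algebra $S$, compatibly with \Cref{transmutation on generator}. To see this, one identifies the tilt of $R \coloneqq S[x^{1/p^\infty}]/x$ with the $x$-adic completion $R^{\flat} = S[\![x^{1/p^\infty}]\!]$ and the kernel ideal with $(x)$ (cf.\ the remark after \Cref{otime}). Since $x$ is a non-zero-divisor in $R^\flat$, the ideal $(x)$ is free of rank $1$, and \Cref{transmutation on generator} together with the base change \Cref{base change remark} gives
\[
\mathrm{Tm}(d)(R) \simeq \mathcal{O}(M) \otimes_{B[x^{1/p^\infty}]} \bigl((S \otimes_k B)[\![x^{1/p^\infty}]\!]\bigr).
\]
By nilpotency, $d^*(x)^N = 0$ in $\mathcal{O}(M)$ for some $N$, so $\mathcal{O}(M)$ is already a module over $B[x^{1/p^\infty}]/(x^N)$. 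Since $(S \otimes_k B)[\![x^{1/p^\infty}]\!]/(x^N) = (S \otimes_k B) \otimes_B B[x^{1/p^\infty}]/(x^N)$, the tensor product collapses to $\mathcal{O}(M) \otimes_B (S \otimes_k B)$. Specializing to $S = k$ yields $\mathrm{Tm}(d)(k[x^{1/p^\infty}]/x) \simeq \mathcal{O}(M)$, and the general identification shows that the natural comparison map in (2) is an isomorphism.

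For condition (3), note that $R \otimes_S R = S[y^{1/p^\infty}, z^{1/p^\infty}]/(y, z)$, whose tilt is $S[\![y^{1/p^\infty}, z^{1/p^\infty}]\!]$ with kernel ideal $(y, z)$, a regular sequence. The Koszul presentation of $(y, z)$ shows that $\mathrm{Hom}_{R^\flat \otimes R^\flat}((y, z), M(A))$ consists of pairs $(m_y, m_z) \in M(A)^2$ satisfying $z \cdot m_y = y \cdot m_z$, while the $d$-compatibility imposes $d(m_y) = y$ and $d(m_z) = z$ in $A^\flat$. Here the quasi-ideal relation $d(x) \cdot y = d(y) \cdot x$ enters crucially: the Koszul syzygy $z m_y = y m_z$ is automatic from the $d$-compatibility, since $z m_y = d(m_z)\cdot m_y = d(m_y)\cdot m_z = y m_z$. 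Consequently $\mathrm{Sec}_{(R^\flat \otimes R^\flat,\, (y,z))}(d)$ is the unrestricted fiber product $M \times_{\mathbb{G}_a^{\mathrm{perf}}} \mathrm{Spec}(R^\flat \otimes_k B) \times_{\mathbb{G}_a^{\mathrm{perf}}} M$ over the coordinates $(y, z)$. Taking sections and applying the nilpotency truncation as in the previous paragraph yields $\mathrm{Tm}(d)(R \otimes_S R) \simeq \mathcal{O}(M) \otimes_B \mathcal{O}(M) \otimes_B (S \otimes_k B)$, which agrees with the tensor product $\mathrm{Tm}(d)(R) \otimes_{\mathrm{Tm}(d)(S)} \mathrm{Tm}(d)(R)$ by direct computation.

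The main obstacle is the interplay between the two hypotheses. Nilpotency is what allows us to replace the $x$-adic completion $R^{\flat}$ by its truncation modulo $x^N$, turning the transmutation into an honest finite base change of $\mathcal{O}(M)$; the quasi-ideal property is precisely what kills the Koszul relation in the two-variable computation so that $\mathrm{Sec}$ reduces to an unrestricted product of copies of $M$. A minor but necessary check, which follows from $S$-flatness of the perfected polynomial ring, is that $x$ (respectively $y, z$) are indeed regular elements in the relevant tilts so that \Cref{transmutation on generator} and its two-variable analogue apply.
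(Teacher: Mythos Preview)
Your argument is correct and matches the approach the paper cites from \cite[Prop.~3.4.16, 3.4.17]{M22} and sketches in \Cref{decompletion for NQID}: nilpotency of $d^*(x)$ lets one replace the completed tilt by its truncation modulo $x^N$, and the quasi-ideal identity $d(m)\cdot m' = d(m')\cdot m$ forces the Koszul syzygy $z\,m_y = y\,m_z$ in the two-variable moduli. The only slips are notational --- in the intermediate step for (2) the base-change \Cref{base change remark} yields $S[\![x^{1/p^\infty}]\!] \otimes_k B$ rather than $(S \otimes_k B)[\![x^{1/p^\infty}]\!]$ (they agree after the $x^N$-truncation you immediately apply), and in (3) the pair should be indexed by $(R\otimes_S R)^\flat$ rather than ``$R^\flat \otimes R^\flat$'' --- but neither affects the substance.
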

\begin{proof}
This amounts to checking that the three conditions in \cref{otime} are satisfied. The first one follows from \cref{useexample}, and the latter two follows from the proof of \cite[Prop.~3.4.16, 3.4.17]{M22}. 
\end{proof}

\begin{remark}
\label{decompletion for NQID}
Let us sketch the idea of showing that transmutation of a nilpotent quasi-ideal
satisfies the second condition of \Cref{otime}:
Recall that the transmutation is defined via a moduli interpretation (see \Cref{sections}),
so we need to show that the two corresponding moduli functors are naturally isomorphic.
The only subtlety is the domains of these moduli functors, 
assume that $d^*(x^N) = 0$, then we just need to observe the following identification of categories
\[
\{S[\![x^{1/p^\infty}]\!]\text{-algebras} \mid x^N = 0\} \cong \{S[x^{1/p^\infty}]\text{-algebras} \mid x^N = 0\},
\]
which gives the identification of domains of these moduli functors.
Using the same logic, one checks that for $(M, d) \in \mathbb{G}_a^{\mathrm{perf}}-\mathrm{Mod}_*$
such that $d^*(x^N) = 0$ for some $N \in \mathbb{N}$. 
Then for $(S, I) = (k[\![x^{1/p^\infty}]\!], (x)) \in \mathscr{P}I$, the analog of \Cref{transmutation on generator} 
still holds true, namely $\mathrm{Sec}_{(S, I)}(d) \cong M$ as schemes over $\mathbb{G}_a^{\mathrm{perf}}$.
\end{remark}

\begin{construction}
Consider the quasi-ideal in $\mathbb{G}_{a, k}^{\mathrm{perf}}$
$$\alpha^\flat \coloneqq \mathrm{Ker}(\mathbb{G}_a^{\mathrm{perf}} \to \mathbb{G}_a).$$ 
Now let $F \in \mathrm{Fun}(\mathrm{QRSP}_k, \mathrm{Alg}_B)^{\otimes}_{\mathfrak{G}/}$, we may apply
$\Spec(F(-))$ to the map $k[x^{1/p^\infty}] \twoheadrightarrow k[x^{1/p^\infty}]/(x) \cong \mathcal{O}(\alpha^\flat)$.
By the conditions in \cref{otime}, the resulting object is a quasi-ideal in $\mathbb{G}_a^{\mathrm{perf}}$.
Next we claim that the resulting quasi-ideal is nilpotent: Using \Cref{check nilpotence remark}
it is equivalent to checking the image of $1 - x$ being a unit in $F(k[x^{1/p^\infty}]/(x))$.
Note that the map $k[x^{1/p^\infty}] \twoheadrightarrow k[x^{1/p^\infty}]/(x)$
factors through the $x$-adic completion of the source, which is perfect and in which $1-x$ is already a unit,
so our claim follows from the first condition in \cref{otime}.
Since $\Spec(-)$ is contravariant, the above ``restriction to $\alpha^{\flat}$'' process defines a functor
$$\mathrm{Res}\colon \mathrm{Fun}(\mathrm{QRSP}_k, \mathrm{Alg}_B)^{\otimes}_{\mathfrak{G}/} \to \left( \mathcal{N}\mathrm{QID}-\mathbb{G}_a^{\mathrm{perf}} \right)^{\mathrm{op}}.$$ 
\end{construction}

\begin{proposition}[Full faithfulness of transmutation]
\label{transfullyfaithful}
Let $B$ be a $k$-algebra fixed as before. The functors 
$$\mathrm{Tm}\colon \left(\mathcal{N}\mathrm{QID}-\mathbb{G}_a^{\mathrm{perf}} \right)^{\mathrm{op}}
\substack{\xrightarrow{} \\ \xleftarrow{}} \mathrm{Fun}(\mathrm{QRSP}_k, \mathrm{Alg}_B)^{\otimes}_{\mathfrak{G}/}\colon \mathrm{Res}$$ are adjoint to each other and
the left adjoint $\mathrm{Tm}$ is fully faithful.
\end{proposition}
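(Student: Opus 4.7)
The plan is to establish both claims simultaneously by first computing $\mathrm{Res}\circ\mathrm{Tm}$ directly, then constructing the hom-bijection defining the adjunction; full faithfulness of $\mathrm{Tm}$ will then follow formally from the unit being an isomorphism.

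First, I would verify that $\mathrm{Res}\circ\mathrm{Tm}\cong\mathrm{id}$ on $\mathcal{N}\mathrm{QID}$-$\mathbb{G}_a^{\mathrm{perf}}$. For a nilpotent quasi-ideal $d\colon M\to\mathbb{G}_a^{\mathrm{perf}}$, the construction preceding the proposition produces $\mathrm{Res}(\mathrm{Tm}(d))$ by applying $\mathrm{Spec}(\mathrm{Tm}(d)(-))$ to the surjection $k[x^{1/p^\infty}]\twoheadrightarrow k[x^{1/p^\infty}]/(x)$. The value on the source is $\mathcal{O}(\mathbb{G}_{a,B}^{\mathrm{perf}})$ by \Cref{useexample}. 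The value on the target is computed via the pair $(k[\![x^{1/p^\infty}]\!],(x))\in\mathscr{P}I$, and the nilpotency hypothesis on $d^*(x)$ activates the decompletion argument of \Cref{decompletion for NQID}, yielding $\mathrm{Sec}_{(k[\![x^{1/p^\infty}]\!],(x))}(d)\cong M$ as schemes over $\mathbb{G}_a^{\mathrm{perf}}$ with structure map $d$. The quasi-ideal structure is transported through this identification by \Cref{why quasi-ideal}, giving $\mathrm{Res}(\mathrm{Tm}(d))\cong(M,d)$ naturally.

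Next, I would construct the hom-bijection
\[
\mathrm{Hom}_{\mathrm{Fun}(\mathrm{QRSP}_k,\mathrm{Alg}_B)^{\otimes}_{\mathfrak{G}/}}(\mathrm{Tm}(d),F)\;\cong\;\mathrm{Hom}_{\mathcal{N}\mathrm{QID}\text{-}\mathbb{G}_a^{\mathrm{perf}}}(\mathrm{Res}(F),d).
\]
The forward direction sends $\phi\colon\mathrm{Tm}(d)\to F$ to $\mathrm{Res}(\phi)$ composed with the isomorphism from the previous paragraph. For the inverse, a morphism $\beta\colon\mathrm{Res}(F)\to d$ provides a map of coordinate rings $\mathcal{O}(M)\to F(k[x^{1/p^\infty}]/(x))$; I extend this to a natural transformation $\mathrm{Tm}(d)\to F$ on all of $\mathrm{QRSP}_k$ using the three tensor axioms of \Cref{otime}. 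Concretely, these axioms determine $F(S)$ for an arbitrary qrsp $S$ as a canonical tensor construction from $F(k[x^{1/p^\infty}]/(x))$ and the values of $\mathfrak{G}$ on perfect rings; the moduli interpretation of \Cref{sections} yields an entirely parallel presentation for $\mathrm{Tm}(d)(S)$, and I would check that the induced maps on each piece assemble into the desired natural transformation, and that the two constructions are mutually inverse.

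The main technical difficulty lies in the inverse construction above: promoting data living at the single object $k[x^{1/p^\infty}]/(x)$ into a natural transformation compatible with every morphism in $\mathrm{QRSP}_k$, not merely those arising from the generating diagrams controlled by the tensor axioms. This requires arguing that the subcategory generated by $k[x^{1/p^\infty}]/(x)$ and perfect rings under the tensor operations recorded in \Cref{otime} effectively controls arbitrary qrsp algebras (via their presentation as quotients of perfect rings by quasi-regular ideals), and carefully tracking the $\mathbb{N}[1/p]$-graded bi-commutative Hopf structure together with the quasi-ideal compatibility along the way. Once adjointness is in place, full faithfulness follows formally: the first step gives $\mathrm{Hom}_D(\mathrm{Tm}(d),\mathrm{Tm}(d'))\cong\mathrm{Hom}_{(\mathcal{N}\mathrm{QID})^{\mathrm{op}}}(d,\mathrm{Res}(\mathrm{Tm}(d')))\cong\mathrm{Hom}_{(\mathcal{N}\mathrm{QID})^{\mathrm{op}}}(d,d')$, as desired.
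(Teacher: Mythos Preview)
Your overall strategy---establish the unit $\mathrm{id} \xrightarrow{\sim} \mathrm{Res} \circ \mathrm{Tm}$ via \Cref{Tm of nilp satisfies axioms} and \Cref{decompletion for NQID}, then build the adjunction and read off full faithfulness from the unit being an equivalence---matches the paper exactly. The paper's own proof is a sketch, delegating the counit construction to \cite[Prop.~3.2.18, Prop.~3.4.20]{M22}.

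Your proposed construction of the inverse bijection, however, rests on a claim that does not hold. You assert that the axioms of \Cref{otime} ``determine $F(S)$ for an arbitrary qrsp $S$ as a canonical tensor construction,'' and locate the main difficulty in showing that $k[x^{1/p^\infty}]/(x)$ together with perfect rings ``control'' all of $\mathrm{QRSP}_k$ under the tensor operations. But axioms (1)--(3) only compute $F$ on perfect rings and on algebras of the shape $S[x_1^{1/p^\infty},\ldots,x_n^{1/p^\infty}]/(x_1,\ldots,x_n)$ with $S$ perfect; a general qrsp algebra is not of this form, and there is no such reconstruction of $F$. As written, your plan would stall precisely at the step you flag as the crux.

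The paper (via \cite{M22}) avoids this by using the \emph{corepresenting} property of transmutation from \Cref{sections} to build the counit directly, rather than trying to decompose $F$. For $d = \mathrm{Res}(F)$ and $R \in \mathrm{QRSP}_k$ with $S = R^\flat$ and $I = \ker(S \to R)$, giving a map $\mathrm{Tm}(d)(R) \to F(R)$ under $S \otimes_k B$ is the same as giving an $S$-linear section $\sigma\colon I \to M(F(R))$ of $d$. One produces $\sigma(f)$ for each $f \in I$ by applying $F$ to the qrsp map $S[t^{1/p^\infty}]/(t) \to R$ sending $t^{1/p^i} \mapsto \overline{f^{1/p^i}}$, and then invoking axiom (2) to identify $F(S[t^{1/p^\infty}]/(t)) \cong \mathcal{O}(M) \otimes_B (S \otimes_k B)$; axioms (2) and (3) are what verify $S$-linearity and additivity of $\sigma$. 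The universal property thus handles arbitrary qrsp inputs for free---no reconstruction of $F$ is needed, and the axioms enter only to certify the required module structure on the section.
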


\begin{proof}
Using \Cref{Tm of nilp satisfies axioms} and \Cref{decompletion for NQID}, we first 
get an equivalence $\id \xrightarrow{\sim} \mathrm{Res} \circ \mathrm{Tm}$, which is the desired unit.
Next we need to construct a counit and check compatibility conditions.
The key idea is to use the universal property of transmutation construction to produce a natural transformation $\mathrm{Tm} (\mathrm{Res}\, F) \to F$ for any $F \in \mathrm{Fun}(\mathrm{QRSP}_k, \mathrm{Alg}_B)^{\otimes}_{\mathfrak{G}/}$. 
For details, see the proof of \cite[Prop.~3.2.18, Prop.~3.4.20]{M22}. 
Note that the unit map $\id \xrightarrow{\sim} \mathrm{Res} \circ \mathrm{Tm}$ is an equivalence,
we see that the left adjoint $\mathrm{Tm}$ is fully faithful.
 \end{proof}

\begin{construction}[Pullback functors] For any ring scheme $R$, one can define a notion of augmented $R$-modules as in \cref{augmodule}. For a map of ring schemes $u\colon R_1 \to R_2$, and an augmented $R_2$-module $d\colon M \to R_2$, the scheme theoretic pullback $M\times _{R_2} R_1 \to R_1$ is naturally an augmented $R_1$-module, which we denote by $u^* d.$
    
\end{construction}

\begin{proposition}
Let $u\colon \mathbb{G}_a^{\mathrm{perf}} \to \mathbb{G}_a$. Then the pullback functor from augmented $\mathbb{G}_a$-modules to augmented $\mathbb{G}_a^{\mathrm{perf}}$-modules is fully faithful.  
\end{proposition}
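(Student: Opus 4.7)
The plan is to work dually via the anti-equivalence from \Cref{interprete Gaperf module} together with its analog for $\mathbb{G}_a$-modules, which uses $\mathbb{N}$-graded (in place of $\mathbb{N}[1/p]$-graded) bi-commutative Hopf $B$-algebras. Under these identifications, an augmented $\mathbb{G}_a$-module corresponds to an $\mathbb{N}$-graded bi-commutative Hopf $B$-algebra $A$ (with the analogous $\mathbb{G}_a$-compatibility) together with a graded map $k[x] \to A$, and similarly for augmented $\mathbb{G}_a^{\mathrm{perf}}$-modules with $\mathbb{N}[1/p]$-grading. The pullback $u^*$ translates dually to $A \mapsto A \otimes_{k[x]} k[x^{1/p^\infty}]$, equipped with the induced $\mathbb{N}[1/p]$-grading (where $1 \otimes x^{1/p^n}$ has degree $1/p^n$) and induced Hopf structure.

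The key input is the identification of $A$ with the $\mathbb{N}$-graded sub-Hopf-algebra of $A \otimes_{k[x]} k[x^{1/p^\infty}]$. This follows from the free $k[x]$-basis $\{x^a : a \in [0,1) \cap \mathbb{N}[1/p]\}$ of $k[x^{1/p^\infty}]$, which gives a decomposition
\[
A \otimes_{k[x]} k[x^{1/p^\infty}] \;=\; \bigoplus_{a \in [0,1) \cap \mathbb{N}[1/p]} A \cdot (1 \otimes x^a),
\]
in which $A_n \cdot (1 \otimes x^a)$ sits in $\mathbb{N}[1/p]$-degree $n + a$; this degree is integral precisely when $a = 0$, so that the $\mathbb{N}$-graded part is $\bigoplus_n A_n = A$. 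The Hopf structure on $A \otimes_{k[x]} k[x^{1/p^\infty}]$ restricts to $A$ because the inclusion $A \hookrightarrow A \otimes_{k[x]} k[x^{1/p^\infty}]$ is dual to the projection $u^* M \to M$, which is a homomorphism of group schemes.

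Given this identification, full faithfulness is formal. A morphism of augmented $\mathbb{G}_a^{\mathrm{perf}}$-modules $u^* M_1 \to u^* M_2$ corresponds dually to a map of $\mathbb{N}[1/p]$-graded Hopf algebras over $k[x^{1/p^\infty}]$,
\[
\phi \colon A_2 \otimes_{k[x]} k[x^{1/p^\infty}] \longrightarrow A_1 \otimes_{k[x]} k[x^{1/p^\infty}].
\]
Since $\phi$ preserves the $\mathbb{N}[1/p]$-grading, its restriction to the $\mathbb{N}$-graded subalgebras produces a map $\psi \colon A_2 \to A_1$ of $\mathbb{N}$-graded Hopf algebras over $k[x]$. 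Conversely, any such $\psi$ extends to $\phi = \psi \otimes_{k[x]} \id_{k[x^{1/p^\infty}]}$, and this extension is the unique $k[x^{1/p^\infty}]$-linear one because $A_2 \otimes_{k[x]} k[x^{1/p^\infty}]$ is generated by $A_2$ together with $k[x^{1/p^\infty}]$, on both of which $\phi$ is forced.

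The main point needing care is the extra compatibility mentioned in \Cref{interprete Gaperf module} between the $\mathbb{G}_a^{\mathrm{perf}}$-ring scheme structure and the graded Hopf algebra. However, this compatibility is built from the ring scheme structure on $\mathbb{G}_a$ pulled back along $u$, so it restricts to the $\mathbb{N}$-graded subalgebra as the analogous $\mathbb{G}_a$-compatibility, confirming that the data extracted on $A$ is precisely that of an augmented $\mathbb{G}_a$-module.
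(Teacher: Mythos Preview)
Your argument is correct. The paper itself does not give a proof here but simply cites \cite[Prop.~2.2.17]{M22}, so there is no in-paper argument to compare against; your approach via the graded Hopf-algebra dictionary of \Cref{interprete Gaperf module} is the natural one and is essentially the method of that reference. The key computation---that the free $B[x]$-basis $\{x^a : a \in [0,1)\cap\mathbb{N}[1/p]\}$ of $B[x^{1/p^\infty}]$ identifies $A$ with the integrally-graded part of $A \otimes_{B[x]} B[x^{1/p^\infty}]$---is exactly what makes the pullback functor fully faithful, and your extraction of the inverse map via this grading is clean.

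Two minor remarks. First, the base should be $B$ rather than $k$ throughout, to match the paper's setup (though the argument is identical). Second, your final paragraph about the ``extra compatibility'' is not really needed: that compatibility is a \emph{property} of the objects, not extra structure, and you only need to check that morphisms of augmented $\mathbb{G}_a^{\mathrm{perf}}$-modules between objects in the image of $u^*$ restrict to morphisms of augmented $\mathbb{G}_a$-modules. Since the $\mathbb{N}$-graded part of $u^*(A_i)$ is $A_i$ itself (with its original augmented $\mathbb{G}_a$-module structure already in place), there is nothing further to verify.
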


\begin{proof}
See \cite[Prop.~2.2.17]{M22}.   
\end{proof}{}

Let us give some examples of nilpotent quasi-ideals and their corresponding
transmutations.

\begin{example}
\label{Tm inverse of O}
The functor $\mathrm{id \otimes B}\colon \mathrm{QRSP}_k \to \mathrm{Alg}_B$ that sends $S \mapsto S \otimes_k B$ is isomorphic to $\mathrm{Tm}(\alpha^\flat \to \mathbb{G}_a^{\mathrm{perf}}).$
\end{example}{}

\begin{example}[de Rham cohomology, see {\cite[Prop.~4.0.3]{M22}}]
\label{exa1}
Consider the quasi-ideal in $\mathbb{G}_a$ given by $W[F]\to \mathbb{G}_a$, and let us pullback this quasi-ideal
along the map $u\colon \mathbb{G}_a^{\mathrm{perf}} \to \mathbb{G}_a$ of ring schemes.
Then the derived de Rham cohomology, viewed as an object $\mathrm{dR} \in \mathrm{Fun}(\mathrm{QRSP}_k, \mathrm{Alg}_B)^{\otimes}_{\mathfrak{G}/}$ is equivalent to $\mathrm{Tm}\left(u^*(W[F]\to \mathbb{G}_a)\right)$.
\end{example}

\begin{construction}[Frobenius pushforwards]
\label{frobpush}
Let $\varphi\colon \mathbb{G}_a^{\mathrm{perf}} \to \mathbb{G}_a^{\mathrm{perf}}$ 
be the relative Frobenius map over $B$. 
Given a $\mathbb{G}_a^{\mathrm{perf}}$-module $M$, 
one can consider the tensor product $M^{(1/p)} \coloneqq M \otimes_{\mathbb{G}_a^{\mathrm{perf}}, \varphi} \mathbb{G}_a^{\mathrm{perf}}$, 
which is representable by an affine scheme and is naturally a $\mathbb{G}_a^{\mathrm{perf}}$-module. 
Now given an augmented $\mathbb{G}_a^{\mathrm{perf}}$-module $d\colon M \to \mathbb{G}_a^{\mathrm{perf}}$, 
we may compose the maps 
$$  
M^{(1/p)} \to { \mathbb{G}_a^{\mathrm{perf}}}^{(1/p)}\xrightarrow{\varphi} \mathbb{G}_a^{\mathrm{perf}}
$$ 
to obtain another augmented $\mathbb{G}_a^{\mathrm{perf}}$-module that we denote by $\varphi_*d\colon  M^{(1/p)} \to \mathbb{G}_a^{\mathrm{perf}}$. 

On the other hand, given an object $\mathfrak{G} \to F$ of $\mathrm{Fun}(\mathrm{QRSP}_k, \mathrm{Alg}_B)_{\mathfrak{G}/}$, we set $$F^{(1/p)}(S) \coloneqq F(\varphi_{k,*} S).$$ There is a natural transformation of functors $\mathfrak{G}^{(1/p)} \to F^{(1/p)}.$ For any $S \in \mathrm{QRSP}_k$, the $k$-linear Frobenius map $S^\flat \to \varphi_{k,*} S^\flat$ induces a map $S^\flat \otimes_k B \to \varphi_{k,*} S^\flat \otimes_k B$ that gives a natural transformation $\mathfrak{G} \xrightarrow{\mathrm{Frob}}\mathfrak{G}^{(1/p)}$. The composition $\mathfrak{G} \xrightarrow{\mathrm{Frob}} \mathfrak{G}^{(1/p)} \to F^{(1/p)}$ is naturally an object of $\mathrm{Fun}(\mathrm{QRSP}_k, \mathrm{Alg}_B)_{\mathfrak{G}/}$, which will be denoted as $\mathrm{Frob}_*(\mathfrak{G} \to F)$. By construction, transmutation is compatible with the two operations we discussed. Namely, we have 
$$\mathrm{Tm} (\varphi_*d) \simeq \mathrm{Frob}_*(\mathrm{Tm} (d)).$$

By the same logic of \Cref{Frob for augmented Gaperf mod}, using the equivalence
$\id \xrightarrow{\sim} \mathrm{Res} \circ \mathrm{Tm}$ from \Cref{transfullyfaithful}, for any 
$(M, d) \in \mathcal{N}\mathrm{QID}-\mathbb{G}_a^{\mathrm{perf}}$, we get a map
$\varphi_M \colon \varphi_* d \to d$ in $\mathcal{N}\mathrm{QID}-\mathbb{G}_a^{\mathrm{perf}}$,
which corresponds to the map $\mathrm{Tm} (d) \to \mathrm{Frob}_*(\mathrm{Tm} (d))$ given by applying functoriality to
the relative Frobenius of the input $S \to \varphi_* S$.
\end{construction}{}

In general the map $\varphi_M$ from above is not transparent, but in the case of
$\alpha^{\flat}$ it is easy to determine.

\begin{lemma}
\label{Frob on alpha flat}
The map $\varphi_{\alpha^{\flat}}$ is given by the ``$B$-linear Frobenius'':
\[
\alpha^{\flat, (1/p)} \cong \Spec(B[x^{1/p^\infty}]/(x)) \xrightarrow{x^{1/p^{i+1}} \mapsto x^{1/p^i}} 
\Spec(B[x^{1/p^\infty}]/(x)) \cong\alpha^{\flat} .
\]
\end{lemma}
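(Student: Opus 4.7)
The plan is to trace $\varphi_{\alpha^\flat}$ through the adjunction of \Cref{transfullyfaithful}. By \Cref{frobpush}, $\varphi_{\alpha^\flat}$ corresponds to the natural transformation $\tau \colon \mathrm{Tm}(\alpha^\flat) \to \mathrm{Frob}_*(\mathrm{Tm}(\alpha^\flat))$ obtained by evaluating $\mathrm{Tm}(\alpha^\flat)$ on the relative Frobenius $\varphi_{S/k}\colon S \to \varphi_{k,*}S$ for each $S \in \mathrm{QRSP}_k$. Using \Cref{Tm inverse of O}, I will identify $\mathrm{Tm}(\alpha^\flat)(S)$ with $S \otimes_k B$, so that $\tau_S$ becomes the explicit $B$-linear map $s \otimes b \mapsto s^p \otimes b$.

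To extract the underlying scheme map, I will then apply the right adjoint $\mathrm{Res}$, which evaluates a functor on the generator $(k[x^{1/p^\infty}], (x))$ and passes to $\mathrm{Spec}$. This reduces the computation to a single ring map, namely $\tau$ at $S = k[x^{1/p^\infty}]/(x)$: the relative Frobenius $\varphi_{S/k}$ sends the generator $x^{1/p^{i+1}}$ of $S$ to its $p$-th power $x^{1/p^i}$, while annihilating $x$ (as $x = 0$ in $S$). After tensoring with $B$ and using \Cref{Tm inverse of O} together with the base-change compatibility \Cref{base change remark} to identify $\mathrm{Frob}_*(\mathrm{Tm}(\alpha^\flat))$ at $S$ with the coordinate ring of $\alpha^{\flat,(1/p)} \cong \mathrm{Spec}(B[x^{1/p^\infty}]/(x))$, the map $\tau_{k[x^{1/p^\infty}]/(x)}$ reads as $x^{1/p^{i+1}} \mapsto x^{1/p^i}$, as claimed.

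The main bookkeeping obstacle is the consistency of identifications: one must verify that the isomorphism $\alpha^{\flat,(1/p)} \cong \mathrm{Spec}(B[x^{1/p^\infty}]/(x))$ appearing in the statement is precisely the one arising from the equality $\mathrm{Tm}(\varphi_* d) \simeq \mathrm{Frob}_*(\mathrm{Tm}(d))$ combined with the unit equivalence $\mathrm{id} \xrightarrow{\sim} \mathrm{Res} \circ \mathrm{Tm}$ of \Cref{transfullyfaithful}, so that the displayed formula genuinely describes $\varphi_{\alpha^\flat}$ and not a twist thereof. Once this identification is pinned down, the $B$-linearity and the formula $x^{1/p^{i+1}} \mapsto x^{1/p^i}$ both follow immediately from the explicit description of $\tau$ above.
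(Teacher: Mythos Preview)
Your approach is correct and essentially the same as the paper's: both compute $\varphi_{\alpha^\flat}$ by tracing its definition in \Cref{frobpush} through transmutation and evaluating at the generator. The paper's one-line proof invokes the commutative diagram of \Cref{Frob for augmented Gaperf mod} directly---since $d^* \colon B[x^{1/p^\infty}] \twoheadrightarrow \mathcal{O}(\alpha^\flat)$ is surjective, the top arrow ``Frob'' is forced by the bottom arrow $x \mapsto x^p$---whereas you route the same computation through \Cref{Tm inverse of O} and the relative Frobenius on the qrsp ring $k[x^{1/p^\infty}]/(x)$; these unwind to the identical formula.
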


Note that the element $x^i \in \mathcal{O}(\alpha^{\flat, (1/p)})$ has degree $i/p$,
therefore the relative Frobenius is indeed a graded map.

\begin{proof}
This follows from the commutative diagram from \Cref{Frob for augmented Gaperf mod}.
\end{proof}

\begin{remark}
\label{remarkcomp}
Note that the category of augmented $\mathbb{G}_a^{\mathrm{perf}}$-modules have pullbacks. In fact, the forgetfull functor ${\mathbb{G}_a^{\mathrm{perf}}}-\mathrm{Mod}_* \to {\mathbb{G}_a^{\mathrm{perf}}}-\mathrm{Mod}$ that sends $(M \to \mathbb{G}_a^{\mathrm{perf}}) \mapsto M$ preserves pullbacks. Furthermore, the category $\mathcal{N}\mathrm{QID}-\mathbb{G}_a^{\mathrm{perf}}$ is closed under pullbacks and the (contravariant) transmutation functor
$$\mathrm{Tm}\colon  \left( \mathcal{N}\mathrm{QID}-\mathbb{G}_a^{\mathrm{perf}} \right)^{\mathrm{op}} \to \mathrm{Fun}(\mathrm{QRSP}_k, \mathrm{Alg}_B)^{\otimes}_{\mathfrak{G}/} $$ takes pullbacks to pushouts. 
\end{remark}{}

\begin{example}
The global sections of the underlying $\mathbb{G}_a^{\mathrm{perf}}$-module $u^* W[F]$ gives a $\mathbb{N}[1/p]$-graded 
$B$-Hopf algebra, whose underlying $\mathbb{N}[1/p]$-graded 
$B$-algebra is explicitly described as  
$$\frac{B[x_0^{1/p^\infty}, x_1, \ldots, x_i, \ldots]}{x_i^p},$$
where $\deg x_i = p^i$ for all $i \in \mathbb{N}$.  The global sections of $\alpha^{\flat}$ can be explicitly described as $B[x_0^{1/p^\infty}]/x_0$, where $\deg x_0 = 1$. Note that there is no map 
$u^* W[F] \dashrightarrow \alpha^\flat$ of augmented $\mathbb{G}_a^{\mathrm{perf}}$-modules.
\end{example}

\begin{remark}
\label{rmk2}
The $0$-th conjugate filtration (\Cref{conjugate filtration}) gives rise to a map
$\id \otimes_k B \to F_* \dR \otimes_k B$
in $\mathrm{Fun}(\mathrm{QRSP}_k, \mathrm{Alg}_B)^{\otimes}_{\mathfrak{G}/}$. By \Cref{Tm inverse of O}, the source is $\mathrm{Tm}(\alpha^{\flat})$.
By \Cref{exa1} and \Cref{frobpush}, the target is $\mathrm{Tm}(u^* W[F])^{(1/p)})$.
Then according to \cref{transfullyfaithful}, the $0$-th conjugate filtration
must be given, via transmutation, by a map $(u^* W[F])^{(1/p)} \to \alpha^\flat$
of nilpotent quasi-ideals in $\mathbb{G}_a^{\perf}$.

There is unique such map because the augmentation $d \colon \alpha^\flat \hookrightarrow \mathbb{G}_a^{\perf}$
is a closed immersion. Below let us explicate this map.
Note that the $\mathbb{N}[1/p]$-graded $B$-algebra underlying $(u^* W[F])^{(1/p)}$ can be described as 
$$\frac{B[x_0^{1/p^\infty}, x_1, \ldots, x_i, \ldots]}{x_i^p},$$ where $\deg x_i = p^{i-1}$ for all $i \in \mathbb{N}$.
Then the map must be given by
$$
\mathcal{O}(\alpha^{\flat}) \cong \frac{B[x_0^{1/p^\infty}]}{x_0} \to 
\frac{B[x_0^{1/p^\infty}, x_1, \ldots, x_i, \ldots]}{x_i^p},
\text{ } x_0^i \mapsto x_0^{pi},
$$ 
as it has to be a map of schemes over $\mathbb{G}_a^{\perf}$.
\end{remark}{}

Now we describe an augmented $\mathbb{G}_a^{\mathrm{perf}}$-module that will be used in the next proposition.

\begin{construction}
 Let $W[F]^{(p)}$ denote the $\mathbb{G}_a$-module obtained from the $\mathbb{G}_a$-module $W[F]$ via restriction of scalars along (the $B$-linear) Frobenius map $\mathbb{G}_a \to \mathbb{G}_a$. Consider the augmented $\mathbb{G}_a$-module $W[F]^{(p)} \xrightarrow{0} \mathbb{G}_a.$ Pulling back along $u\colon \mathbb{G}_a^{\mathrm{perf}} \to \mathbb{G}_a$ and applying the Frobenius pushforward \cref{frobpush}, we obtain an augmented $\mathbb{G}_a^{\mathrm{perf}}$-module $\varphi_*u^* (W[F]^{(p)} \xrightarrow{0} \mathbb{G}_a)$. By construction, it is a nilpotent quasi-ideal in $\mathbb{G}_a^{\mathrm{perf}}.$
\end{construction}

For later purpose, let us explicate the outcome of the above construction:

\begin{remark}
\label{Computing the complicated Gaperf module}
Similar to \Cref{interprete Gaperf module},  
a $\mathbb{G}_a$-module is the same as an $\mathbb{N}$-graded bi-commutative Hopf $B$-algebra satisfying certain extra compatibility (see \cite[Remark 2.2.8]{M22}).
Under this identification, the $\mathbb{G}_a$-module $W[F]^{(p)}$ over $B$ is given by $B[x_0, x_1, \ldots]/(x_i^p)$, with grading $\deg x_i = p^{i+1}$.
Its comultiplication is determined using the Witt vector summation, for instance the comultiplication sends
\[
x_0 \mapsto x_0 \otimes1 + 1 \otimes x_0, \text{  and } x_1 \mapsto x_1 \otimes 1 + 1 \otimes x_1 - 
\sum_{i = 1}^{p-1} \frac{\binom{p}{i}}{p} x_0^{i} \otimes x_0^{p-i}, \ldots.
\]
The augmentation of $W[F]^{(p)} \xrightarrow{0} \mathbb{G}_a$ is simply given by $d: B[t] \to B[x_0, x_1, \ldots]/(x_i^p)$ sending $t \mapsto 0$.

Next we consider the effect of the functor $u^*$ to augmented $\mathbb{G}_a$-module $W[F]^{(p)} \xrightarrow{0} \mathbb{G}_a$.
The underlying $\mathbb{G}_a^{\mathrm{perf}}$-module of $u^* (W[F]^{(p)})$ is given by push out along $u: B[t] \to B[x^{1/p^\infty}]$ that sends $t \mapsto x$ 
\[
B[x_0, x_1, \ldots]/(x_i^p) \otimes_{d, B[t], u} B[x^{1/p^\infty}] = B[x^{1/p^\infty}, x_0, x_1, \ldots]/(x, x_i^p),
\]
with grading $\deg x^{1/p^i} = 1/p^i$ and $\deg x_i = p^{i+1}$. The comultiplication sends $x \mapsto x \otimes 1 + 1 \otimes x$, and respects
the above comultiplication formula on the $x_i$'s.

Lastly after we apply the functor $\varphi_*$, the underlying Hopf $B$-algebra of $\varphi_*u^* (W[F]^{(p)} \xrightarrow{0} \mathbb{G}_a)$ is the same as above, but the grading is divided by $p$,
so we have $\deg x^{1/p^i} = 1/{p^{i+1}}$ and $\deg x_i = p^i$. The augmentation is given by
\[
B[x^{1/p^\infty}] \to B[x^{1/p^\infty}, x_0, x_1, \ldots]/(x, x_i^p), \text{ where } x^{1/p^i} \mapsto x^{1/p^{i-1}}.
\]
\end{remark}

\section{Automorphisms of $F^*F_*\mathrm{dR}$ and decomposition}

Now we can state the main observation involving transumtation that is relevant for the results of our paper. 

\begin{proposition}
\label{transm1}
 Let $k$ be a perfect field and $B$ be a $k$-algebra. Then we have an equivalence in $\mathrm{Fun}(\mathrm{QRSP}_k, \mathrm{Alg}_B)^{\otimes}_{\mathfrak{G}/}$
 $$F^* F_* \mathrm{\mathrm{dR}} \otimes_k B \simeq \mathrm{Tm}(\varphi_*u^* (W[F]^{(p)} \xrightarrow{0} \mathbb{G}_a)).$$  
\end{proposition}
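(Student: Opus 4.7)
The plan is to start with the identification $\mathrm{dR} \otimes_k B \simeq \mathrm{Tm}(u^*(W[F] \to \mathbb{G}_a))$ from Example~\ref{exa1} and track the effect of $F_*$ and $F^*$ on both sides. For $F_*$, Construction~\ref{frobpush} directly yields
\[
F_*\mathrm{dR} \otimes_k B \simeq \mathrm{Tm}\bigl(\varphi_* u^*(W[F] \to \mathbb{G}_a)\bigr),
\]
reducing the problem to understanding how $F^*$ acts at the level of the augmented $\mathbb{G}_a^{\mathrm{perf}}$-module.

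I would complete the argument using the full faithfulness of $\mathrm{Tm}$ (Proposition~\ref{transfullyfaithful}). First, I would verify that $F^*F_*\mathrm{dR} \otimes_k B$ lies in $\mathrm{Fun}(\mathrm{QRSP}_k, \mathrm{Alg}_B)^{\otimes}_{\mathfrak{G}/}$; this amounts to checking the three axioms of Definition~\ref{otime}, which transfer from the analogous axioms for $\mathrm{dR}$ (an object of the same category by Example~\ref{exa1}) because the relevant tensor products happen over perfect $k$-algebras, where $F^*F_*$ is tautological. Second, I would compute $\mathrm{Res}(F^*F_*\mathrm{dR} \otimes_k B)$ by evaluating at $S = k[x^{1/p^\infty}]/(x)$ and applying $\mathrm{Spec}$. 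The claim is that the Frobenius pullback $F^*$ modifies the augmented $\mathbb{G}_a^{\mathrm{perf}}$-module $\varphi_*u^*W[F]$ in two ways: it twists the underlying $\mathbb{G}_a$-module structure by Frobenius, converting $W[F]$ into $W[F]^{(p)}$ (so that the grading is amplified by a factor of $p$), and it composes the augmentation with Frobenius on $\mathbb{G}_a^{\mathrm{perf}}$, forcing it to land in $\alpha^\flat$ and hence factor through zero. Matching Hopf-algebra structures with the explicit formulas of Remark~\ref{Computing the complicated Gaperf module} then completes the identification.

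The main obstacle is rigorously verifying the zero-augmentation claim. A cleaner alternative leverages the $0$-th piece of the conjugate filtration: by Example~\ref{Tm inverse of O} the structure-sheaf functor $\mathrm{id} \otimes_k B$ corresponds to $\mathrm{Tm}(\alpha^\flat)$, so the filtered inclusion $\mathrm{id} \otimes_k B \hookrightarrow F^*F_*\mathrm{dR} \otimes_k B$ must transmute to a canonical map of nilpotent quasi-ideals from the conjectured $\varphi_*u^*(W[F]^{(p)} \xrightarrow{0} \mathbb{G}_a)$ onto $\alpha^\flat$. Combined with the analogous conjugate-filtration datum for $F_*\mathrm{dR}$ exhibited in Remark~\ref{rmk2}, and the fact that the augmentation $\alpha^\flat \hookrightarrow \mathbb{G}_a^{\mathrm{perf}}$ is a closed immersion, the Frobenius-twisted augmentation is then forced to be the unique zero map, pinning down the identification.
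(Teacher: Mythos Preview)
Your plan has a structural gap. Full faithfulness of $\mathrm{Tm}$ (Proposition~\ref{transfullyfaithful}) only yields $\mathrm{Res}\circ\mathrm{Tm}\simeq\mathrm{id}$; it does \emph{not} say that the counit $\mathrm{Tm}\circ\mathrm{Res}\to\mathrm{id}$ is an equivalence. So even after you verify the three axioms of Definition~\ref{otime} and compute $\mathrm{Res}(F^*F_*\mathrm{dR}\otimes_k B)$, you cannot conclude that $F^*F_*\mathrm{dR}\otimes_k B\simeq\mathrm{Tm}$ of that quasi-ideal: nothing in the paper identifies the subcategory cut out by Definition~\ref{otime} with the essential image of $\mathrm{Tm}$. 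Your appeal to ``full faithfulness'' at the end of the first paragraph is therefore not enough to close the argument.

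The paper's proof sidesteps this by never applying $\mathrm{Res}$ to $F^*F_*\mathrm{dR}$. Instead it exhibits $F^*F_*\mathrm{dR}$ directly as a pushout of functors already known to be transmutations: the two legs $(\mathrm{id}\otimes_k B)\to F_*\mathrm{dR}\otimes_k B$ and $(\mathrm{id}\otimes_k B)\to\mathrm{Frob}_*(\mathrm{id}\otimes_k B)$ are, by Remark~\ref{rmk2} and Lemma~\ref{Frob on alpha flat}, the transmutations of $(u^*W[F])^{(1/p)}\to\alpha^\flat$ and $(\alpha^\flat)^{(1/p)}\to\alpha^\flat$. Since $\mathrm{Tm}$ carries pullbacks of nilpotent quasi-ideals to pushouts (Remark~\ref{remarkcomp}), one obtains
\[
F^*F_*\mathrm{dR}\;\simeq\;\mathrm{Tm}\bigl((u^*W[F])^{(1/p)}\times_{\alpha^\flat}(\alpha^\flat)^{(1/p)}\bigr)
\]
with no essential-image issue. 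The remaining step is to identify this fibre product, and here the decisive input you do not mention is the Verschiebung exact sequence $0\to W[F]^{(p)}\xrightarrow{V}W[F]\to\alpha_p\to 0$ of $\mathbb{G}_a$-modules, which gives $W[F]\times_{\alpha_p}(0)\simeq\bigl(W[F]^{(p)}\xrightarrow{0}\mathbb{G}_a\bigr)$ as augmented $\mathbb{G}_a$-modules; after applying $u^*$ and $\varphi_*$ this is the asserted quasi-ideal. Your heuristic that ``$F^*$ twists $W[F]$ into $W[F]^{(p)}$ and forces the augmentation to zero'' is exactly the shadow of this Verschiebung computation, but without the pushout/pullback step and the exact sequence the argument does not assemble.
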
{}

\begin{proof}
By \Cref{base change remark}, we can assume $B \coloneqq k$. Let $d\colon u^*(W[F] \to \mathbb{G}_a)$; 
by \cref{exa1}, it follows that $\mathrm{Tm}(d) \simeq \mathrm{dR}$.
Let $S\in \mathrm{QRSP}_k$. 
By \cref{rmk2}, the $k$-linear map $S \to \varphi_{k,*} \mathrm{dR}(S)$ is induced by applying transmutation to the map
$(u^* W[F])^{(1/p)} \to \alpha^\flat$ of augmented $\mathbb{G}_a^{\mathrm{perf}}$-modules. 
By \Cref{Tm inverse of O} and \Cref{Frob on alpha flat},
the $k$-linear Frobenius map $S \to \varphi_{k,*}S$ is induced by applying transmutation to the 
``$B$-linear Frobenius'' $(\alpha^{\flat})^{(1/p)} \to \alpha^\flat$. 
By \cref{remarkcomp}, it follows that 
$$F^* F_* \mathrm{dR} \simeq \mathrm{Tm}((u^* W[F])^{(1/p)} \times_{\alpha^\flat} (\alpha^\flat)^{(1/p)}). $$
The proposition now follows from observing that 
\begin{equation}\label{impobservation}
 (u^* W[F])^{(1/p)} \times_{\alpha^\flat} (\alpha^\flat)^{(1/p)} \simeq \varphi_*u^* (W[F]^{(p)} \xrightarrow{0} \mathbb{G}_a),   
\end{equation}
which we explain below.

Let $P$ denote the augmented $\mathbb{G}_a^\perf$-module defined as the pullback
$$P\coloneqq u^*W[F] \times_{u^*\alpha_p} \alpha^{\flat}.$$ 
Since $(u^*\alpha_p)^{(1/p)} \simeq \alpha^\flat$, it follows that the left hand side in \cref{impobservation} 
is naturally isomorphic to $\varphi_* P.$ By commuting products and pullbacks, we also have 
$$P \simeq u^* (W[F]\times_{\alpha_p} (0)),$$ where $(0)$ denotes the quasi-ideal in $\mathbb{G}_a$ given by $ 0 \to \mathbb{G}_a.$ Now the Verschiebung operator $V$ induces an exact sequence
of $\mathbb{G}_a$-modules
$$0 \to W[F]^{(p)}\xrightarrow{V} W[F] \to \alpha_p \to 0,$$ which promotes to an isomorphism 
$$W[F]\times_{\alpha_p} (0) \simeq (W[F]^{(p)} \xrightarrow{0} \mathbb{G}_a) $$ of augmented $\mathbb{G}_a$-modules. This finishes the proof.
\end{proof}

\begin{proposition}
\label{endoauto}
Let $k$ be a perfect field and $B$ be a $k$-algebra. Then $\varphi_*u^* (W[F]^{(p)} \xrightarrow{0} \mathbb{G}_a)$ 
has endomorphism monoid given by $(B, \cdot)$. In particular, its automorphism group is $B^\times$. 
Moreover, denote the underlying $\mathbb{G}_a^{\perf}$-module by $M$,
the action of $b \in B$ sends $x_0 \in \mathcal{O}(W[F]^{(p)}) \subset 
\mathcal{O}(M)$ to $b \cdot x_0$.
\end{proposition}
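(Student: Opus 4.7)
The plan is to show that any endomorphism of $M := \varphi_*u^*(W[F]^{(p)} \xrightarrow{0} \mathbb{G}_a)$ is determined by a scalar $b \in B$ via the rule $x_n \mapsto b^{p^n} x_n$, and that each such $b$ defines an endomorphism with composition corresponding to multiplication in $B$. Using the presentation in \Cref{Computing the complicated Gaperf module}, write $R := \mathcal{O}(M) = B[x^{1/p^\infty}, x_0, x_1, \dotsc]/(x, x_i^p)$, with $\deg x^{1/p^i} = 1/p^{i+1}$, $\deg x_i = p^i$, and augmentation $d^*\colon B[x^{1/p^\infty}] \to R$ given by $x^{1/p^i} \mapsto x^{1/p^{i-1}}$. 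By \Cref{interprete Gaperf module}, an endomorphism of $(M,d)$ is a graded Hopf $B$-algebra map $\sigma^*\colon R \to R$ with $\sigma^* \circ d^* = d^*$; this latter condition immediately forces $\sigma^*(x^{1/p^j}) = x^{1/p^j}$ for every $j \geq 1$.

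The next step is a graded-component analysis. Any non-zero monomial of integer degree in $R$ involves only the $x_i$'s, since any positive power of some $x^{1/p^j}$ contributes a non-integer degree strictly between $0$ and $1$ before being killed by $x = 0$. The relations $x_i^p = 0$ combined with the uniqueness of base-$p$ expansion then identify the degree-$p^n$ piece with the free $B$-module on $x_n$. Hence $\sigma^*(x_n) = b_n\, x_n$ for unique $b_n \in B$.

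Next comes Hopf-algebra compatibility. For $n = 1$, equating $\Delta(\sigma^*(x_1))$ with $(\sigma^* \otimes \sigma^*)\Delta(x_1)$ using the formula from \Cref{Computing the complicated Gaperf module} and the invertibility of $\binom{p}{i}/p$ in $\mathbb{F}_p$ forces $b_1 = b_0^p$. For general $n$, the Witt-summation structure gives $\Delta(x_n) = x_n \otimes 1 + 1 \otimes x_n + P_n$ for a polynomial $P_n$ in $x_0, \dotsc, x_{n-1}$ whose leading mixed-degree coefficient is a unit in $\mathbb{F}_p$, so the same matching argument yields $b_n = b_{n-1}^p$, hence $b_n = b_0^{p^n}$ by induction. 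Pinning down this unit coefficient term in $P_n$ for arbitrary $n$ is the main technical obstacle; conceptually it amounts to the standard fact that $\mathbb{G}_a$-module endomorphisms of $W[F]^{(p)}$ are given by Teichmüller scaling.

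For the converse, multiplication by the Teichmüller element $[b] \in W(B)$ is a group scheme endomorphism of $W[F]$ and hence of $W[F]^{(p)}$, which on Witt coordinates acts as $(a_0, a_1, \dotsc) \mapsto (b a_0, b^p a_1, \dotsc, b^{p^n} a_n, \dotsc)$, equivalently on $\mathcal{O}(W[F]^{(p)})$ as $x_n \mapsto b^{p^n} x_n$. This is a graded Hopf $B$-algebra map compatible with the ($B$-linear) $\mathbb{G}_a$-action, so it defines an endomorphism of the augmented $\mathbb{G}_a$-module $W[F]^{(p)} \xrightarrow{0} \mathbb{G}_a$. Applying $u^*$ and $\varphi_*$ via \Cref{frobpush} produces an endomorphism $\sigma_b$ of $M$, which acts on $x_0 \in \mathcal{O}(W[F]^{(p)}) \subset R$ as $x_0 \mapsto b \cdot x_0$. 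Composition of $\sigma_b$ and $\sigma_{b'}$ acts on $x_0$ by $bb'$, so $b \mapsto \sigma_b$ is a monoid isomorphism $(B, \cdot) \xrightarrow{\sim} \mathrm{End}(M,d)$, with automorphism group $B^\times$.
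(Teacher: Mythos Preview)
Your proof is correct and takes a genuinely different, more hands-on route than the paper. The paper first strips away the functors $\varphi_*$ and $u^*$ using their full faithfulness, reducing to endomorphisms of the augmented $\mathbb{G}_a$-module $W[F]^{(p)} \xrightarrow{0} \mathbb{G}_a$; since the augmentation is zero this is the same as $\mathbb{G}_a$-module endomorphisms of $W[F]^{(p)}$, which in turn are endomorphisms of $W[F]$ as a graded group scheme. It then invokes graded Cartier duality to identify these with endomorphisms of $\mathbb{G}_a$ as a graded group scheme, which is visibly $(B,\cdot)$. You instead work directly inside the explicit graded Hopf $B$-algebra of \Cref{Computing the complicated Gaperf module}: the base-$p$ argument pinning down the degree-$p^n$ piece, followed by matching comultiplications, is exactly what Cartier duality encodes, but unpacked by hand. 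Your approach is more elementary and self-contained (no Cartier duality needed), while the paper's is shorter and conceptually cleaner.

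The one step you flag as a technical obstacle---that the coefficient of $x_{n-1}^i \otimes x_{n-1}^{p-i}$ in $\Delta(x_n)$ is a unit---is easily nailed down and worth recording. Since Verschiebung $V^{n-1}$ is additive on $W$, one has
\[
S_n(0,\ldots,0,X_{n-1},X_n;\,0,\ldots,0,Y_{n-1},Y_n) \;=\; S_1(X_{n-1},X_n;\,Y_{n-1},Y_n),
\]
and as the monomial $X_{n-1}^i Y_{n-1}^{p-i}$ involves only the variables $X_{n-1},Y_{n-1}$, its coefficient in $S_n$ agrees with its coefficient in $S_1$, namely $-\tfrac{1}{p}\binom{p}{i}$, a unit in $\mathbb{F}_p$. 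This closes the inductive step $b_n = b_{n-1}^p$ without appeal to any outside ``standard fact''.
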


\begin{proof}
Endomorphisms of $\varphi_*u^* (W[F]^{(p)} \xrightarrow{0} \mathbb{G}_a)$ are equivalent to
endomorphisms of $u^* (W[F]^{(p)} \xrightarrow{0} \mathbb{G}_a)$. 
Further, by full faithfulness of $u^*$, it is equivalent to endomorphisms of 
$(W[F]^{(p)} \xrightarrow{0} \mathbb{G}_a).$ 
This is equivalent to endomorphisms of $W[F]^{(p)}$ as a $\mathbb{G}_a$-module, which is further equivalent to endomorphisms of 
$W[F]$ as a graded group scheme. By graded Cartier duality, 
see \cite[Ex.~2.4.9 and Prop.~2.4.10]{M22}, 
the latter is equivalent to endomorphisms of $\mathbb{G}_a$ as a graded group scheme
(with coordinate given by the functional $(x_0)^{\vee}$).
Our proposition follows from the observation that endomorphism of the graded group scheme is
given by the monoid $(B,\cdot)$, where $b \in B$ sends $(x_0)^{\vee} \mapsto b \cdot (x_0)^{\vee}$.
\end{proof}{}

\begin{proof}[Proof of \cref{mainthm1}] 
By \cref{qsyndescent}, it is equivalent to considering endomorphisms {automorphisms}
of the left Kan extension
$F^*F_*\mathrm{dR} \otimes_k B \in \mathrm{Fun}(\mathrm{QRSP}_k, \mathrm{Alg}_B)$ (by restricting the target category from quasisyntomic rings to $\mathrm{QRSP}_k$). 
By \Cref{automorphism is linear over Fil0}, it is further equivalent to
considering automorphisms of
$F^*F_*\mathrm{dR} \otimes_k B \in \mathrm{Fun}(\mathrm{QRSP}_k, \mathrm{Alg}_B)^{\otimes}_{\mathfrak{G}/}$.
By \Cref{transm1}, 
the functor $F^*F_*\mathrm{dR} \otimes_k B$ is the transmutation of the nilpotent quasi-ideal $\varphi_*u^* (W[F]^{(p)} \xrightarrow{0} \mathbb{G}_a)$.
By \cref{transfullyfaithful}, we are reduced to computing endomorphisms {automorphisms} of 
$\varphi_*u^* (W[F]^{(p)} \xrightarrow{0} \mathbb{G}_a)$,
which is the content of \cref{endoauto}.
\end{proof}

\begin{remark}
Following the above reasoning, it is possible to determine the whole endomorphism monoid of 
$F^* F_* \mathrm{dR} \otimes_k B$, let us indicate the necessary steps and leave the fun of 
computing all endomorphisms to the interested readers.
First of all, from \Cref{automorphism is linear over Fil0}, there is a natural map
\[
\mathrm{End}(F^* F_* \mathrm{dR} \otimes_k B) \rightarrow \mathrm{End}(\mathcal{O} \otimes_k B)
\]
given by restricting to the $0$-th conjugate filtration.
Moreover the proof there can be generalized to help computing the later monoid, it should be
a submonoid of $\mathrm{Frob}^\mathbb{N}$ depending on the cardinality of $k$.
The above map admits a section, given by applying various powers of Frobenius to the input algebra.
Lastly one needs to compute the fiber above $\mathrm{Frob}^i$, this is the same as computing
the following homomorphism set
\[
\mathrm{Hom}((\mathrm{Frob}^i)^*(F^* F_* \mathrm{dR} \otimes_k B), F^* F_* \mathrm{dR} \otimes_k B)
\]
in the category $\mathrm{Fun}(\mathrm{QRSP}_k, \mathrm{Alg}_B)^{\otimes}_{\mathfrak{G}/}$.
Finally, similar to \Cref{transm1}, one can get a nilpotent quasi-ideal corresponding to the
source and then use the proof of \Cref{endoauto} to compute the above homomorphism in the category of
nilpotent quasi-ideals.
\end{remark}

By \Cref{mainthm1}, similar to the discussion before \cite[Theorem 5.4]{LM24}, 
we obtain a functorial action of $\mathbb{G}_m$ on $F^* F_* \mathrm{dR} 
\colon(\mathrm{Sch}_k^{\mathrm{sm}})^{\mathrm{op}} \to \mathrm{CAlg}(k)$.

\begin{theorem}[{c.f.~\cite[Theorem 5.4]{LM24}}]\label{thm5.4}
The functorial $\mathbb{G}_m$-action preserves the conjugate filtration defined in \Cref{conjugate filtration},
and $\gr^{\mathrm{conj}}_i F^* F_* \dR [i]$ is a $\mathbb{G}_m$-representation purely of weight $i$.
\end{theorem}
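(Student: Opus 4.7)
\textbf{Proof plan for \cref{thm5.4}.} The plan is to obtain the first statement immediately from a result already proved in the paper, and to obtain the second by tracing the $\mathbb{G}_m$-action through the transmutation equivalence of \cref{transm1}. The preservation of the conjugate filtration is a formal consequence of \cref{automorphism is linear over Fil0}: every automorphism of $F^*F_*\mathrm{dR}\otimes_k B$ is forced to preserve the conjugate filtration, and in particular this holds for the $\mathbb{G}_m$-action constructed via \cref{mainthm1}.

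For the weight statement, I would first use \cref{qsyndescent}, together with the fact that the conjugate filtration is compatible with left Kan extension from $\mathrm{QRSP}_k$, to reduce to checking purity on the subcategory $\mathrm{QRSP}_k$. On this subcategory \cref{transm1} identifies $F^*F_*\mathrm{dR}\otimes_k B$ with $\mathrm{Tm}(M,d)$ where $(M,d)\coloneqq \varphi_*u^*(W[F]^{(p)}\xrightarrow{0}\mathbb{G}_a)$. By \cref{Computing the complicated Gaperf module}, the underlying graded Hopf $B$-algebra is
\[
\mathcal{O}(M)\;\simeq\; B[x^{1/p^\infty},x_0,x_1,\ldots]/(x,x_i^p),
\]
equipped with the $\mathbb{N}[1/p]$-grading $\deg x^{1/p^i}=1/p^{i+1}$ and $\deg x_i=p^i$.

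Next I would revisit the proof of \cref{endoauto}: the identification $\mathrm{Aut}(M,d)\simeq B^\times$ is realized via graded Cartier duality, under which $b\in B^\times$ corresponds to the scaling $(x_0)^{\vee}\mapsto b\cdot(x_0)^{\vee}$ on the degree-$1$ generator of the Cartier dual. Consequently $b$ acts on $\mathcal{O}(M)$ by the grading automorphism, sending a homogeneous element of degree $d\in\mathbb{N}[1/p]$ to $b^d$ times itself; in particular $x_i\mapsto b^{p^i}x_i$, which is the unique graded Hopf algebra extension compatible with the Witt-vector comultiplication recalled in \cref{Computing the complicated Gaperf module}. Therefore the $B^\times$-action on $(M,d)$ is nothing but the natural grading action on $\mathcal{O}(M)$.

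The remaining step, which I expect to be the main obstacle, is to match the filtration on $\mathrm{Tm}(M,d)$ induced by the $\mathbb{N}[1/p]$-grading of $\mathcal{O}(M)$ with the conjugate filtration, identifying the degree-$i$ summand of $\mathcal{O}(M)$ with $\mathrm{gr}^{\mathrm{conj}}_i F^*F_*\mathrm{dR}[i]$ on $S\in\mathrm{QRSP}_k$. The plan for this is to follow the template of the proof of \cite[Thm.~5.4]{LM24}, tracking how the transmutation construction of \cref{sections} converts the $\mathbb{N}[1/p]$-grading on the augmented module into the conjugate filtration on the output algebra, anchored by the Cartier-type identification $\mathrm{gr}^{\mathrm{conj}}_i F^*F_*\mathrm{dR}[i]\simeq F^*\Omega^i_{(-)^{(1)}/k}$. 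Granted this identification, $b\in B^\times$ acts by $b^i$ on the degree-$i$ piece of $\mathcal{O}(M)$ and hence on $\mathrm{gr}^{\mathrm{conj}}_i F^*F_*\mathrm{dR}[i]$, establishing purity of weight $i$.
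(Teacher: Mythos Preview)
Your handling of the first claim (preservation of the conjugate filtration via \cref{automorphism is linear over Fil0}) matches the paper. For the weight statement there is first a minor error: the automorphism of $(M,d)$ associated to $b\in B^\times$ is \emph{not} the full $\mathbb{N}[1/p]$-grading action on $\mathcal{O}(M)$. As a map of augmented $\mathbb{G}_a^{\mathrm{perf}}$-modules it must fix the image of $d^*$, hence acts trivially on the variables $x^{1/p^i}$ (which have non-integer degree $1/p^{i+1}$), so ``degree $d$ goes to $b^d$'' is not even well-defined for $d\notin\mathbb{N}$. Your formula $x_i\mapsto b^{p^i}x_i$ on the Witt coordinates is nevertheless correct.

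More seriously, your acknowledged ``main obstacle''---matching a grading on $\mathcal{O}(M)$ with the conjugate filtration of $\mathrm{Tm}(M,d)$ as a functor on $\mathrm{QRSP}_k$---is a genuine gap, and it is not what either the paper or \cite[Thm.~5.4]{LM24} does. Note that $\mathcal{O}(M)$ is only the value of the functor at the single object $k[x^{1/p^\infty}]/(x)$; promoting a grading there to a filtration of the whole functor, and then identifying it with the conjugate filtration, would require substantial additional argument that you do not supply. The paper sidesteps this entirely by working on the \emph{smooth} side: it resolves $k[x]$ by the \v{C}ech nerve of $k[x]\to k[x^{1/p^\infty}]$, writes out the first three cosimplicial terms explicitly using \cref{transmutation on generator} and the axioms in \cref{otime}, and exhibits $y_0$ as a cocycle giving a nonzero class in $H^1(\mathbb{A}^1_k, F^*F_*\mathrm{dR})$ of weight $1$ (by \cref{endoauto}). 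Since $H^1$ is free of rank $1$ over $k[x]$ and the action is $k[x]$-linear, all of $H^1$ has weight $1$; the general case follows from K\"unneth for polynomial rings and left Kan extension. Your appeal to ``the template of \cite[Thm.~5.4]{LM24}'' in fact points to this $\mathbb{A}^1$-computation strategy rather than a grading-match on $\mathrm{QRSP}_k$.
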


\begin{proof}
The preservation of conjugate filtration and the triviality of the action on $\Fil^{\conj}_0$ 
were proved in \Cref{automorphism is linear over Fil0}.
Following the same proof strategy of \cite[Theorem 5.4]{LM24}, it would suffice to explicate
the $\mathbb{G}_m$-action on $\mathrm{R\Gamma}(\mathbb{A}^1_k, F^* F_* \mathrm{dR})$
and exhibit a nonzero weight $1$ element in $\mathrm{H}^1(\mathbb{A}^1_k, F^* F_* \mathrm{dR})$.

Recall that the $\mathbb{G}_m$-action on $F^* F_* \mathrm{dR} \otimes_k B(\text{smooth algebras})$
comes, via descent \Cref{qsyndescent}, from the $\mathbb{G}_m$-action on $F^* F_* \mathrm{dR} \otimes_k B(\text{qrsp algebras})$.
The action on $F^* F_* \mathrm{dR} \otimes_k B(\text{qrsp algebras})$ then, due to \Cref{transm1}, comes from \Cref{defoft}.
Therefore we arrive at the following explicit cosimplicial presentation in a $\mathbb{G}_m$-equivariant fashion:
\[
F^* F_* \mathrm{dR}(k[x]) = \lim_{[n] \in \Delta} F^* F_* \mathrm{dR}(k[x^{1/p^{\infty}}]^{\otimes_{k[x]} (n + 1)})
= \lim_{[n] \in \Delta} \mathrm{Tm}(\varphi_*u^* (W[F]^{(p)} \xrightarrow{0} \mathbb{G}_a))
(k[x_0^{1/p^{\infty}}, \ldots, x_{n}^{1/p^{\infty}}]/(x_i - x_j)).
\]
For simplicity, let us denote $G(-) \coloneqq \mathrm{Tm}(\varphi_*u^* (W[F]^{(p)} \xrightarrow{0} \mathbb{G}_a))(-)$.
We are interested in degree $1$ cohomology, to this end, let us explicate the first three terms in a convenient presentation
together with the coface maps: They are given by
\[
\xymatrix{
G\Big(k[x^{1/p^{\infty}}]\Big) \ar@<.4ex>[r] \ar@<-.4ex>[r] &
G\Big(k[x^{1/p^{\infty}}] \otimes_k k[y^{1/p^{\infty}}]/(y)\Big) \ar@<0.8ex>[r] \ar[r] \ar@<-.8ex>[r] &
G\Big(k[x^{1/p^{\infty}}] \otimes_k k[y^{1/p^{\infty}}]/(y) \otimes_k k[z^{1/p^{\infty}}]/(z)\Big),
}
\]
the first two arrows are induced by $x \mapsto x$ and $x \mapsto x+y$ respectively; whereas the latter three arrows are induced by
$(x, y) \mapsto (x, y)$, $(x, y) \mapsto (x, y+z)$, and $(x, y) \mapsto (x + y, z)$ respectively.

Combining \Cref{transmutation on generator} and \Cref{transmutation of id},
we see that for any augmented $\mathbb{G}_a^{\perf}$-module $M \xrightarrow{d} \mathbb{G}_a^{\perf}$, applying
$\mathrm{Tm}(d)(-)$ to $k[x^{1/p^{\infty}}] \to k[x^{1/p^{\infty}}]/(x)$ gives
$d^* \colon k[x^{1/p^{\infty}}] \to \mathcal{O}(M)$.
According to \Cref{Tm of nilp satisfies axioms}, the functor $G(-)$ satisfies the axioms listed in \Cref{otime}.
Therefore, using the computation made in \Cref{Computing the complicated Gaperf module},
the above presentation becomes:
\[
\xymatrix{
k[x^{1/p^{\infty}}] \ar@<.4ex>[r] \ar@<-.4ex>[r] &
k[x^{1/p^{\infty}}] \otimes_k \frac{k[y^{1/p^\infty}, y_i; i \in \mathbb{N}]}{(y, {y_i}^p)} \ar@<0.8ex>[r] \ar[r] \ar@<-.8ex>[r] &
k[x^{1/p^{\infty}}] \otimes_k \frac{k[y^{1/p^\infty}, y_i; i \in \mathbb{N}]}{(y, {y_i}^p)} \otimes_k 
\frac{k[z^{1/p^\infty}, z_i; i \in \mathbb{N}]}{(z, {z_i}^p)},
}
\]
the first two arrows are given by $x \mapsto x$ and $x \mapsto x+y$ respectively; whereas the latter three arrows are
given by $\id \otimes 1$, $\id \otimes \text{comultiplication}$, and $(x \mapsto x + y) \otimes (y, y_i \mapsto z, z_i)$ respectively.
In \Cref{Computing the complicated Gaperf module} we have seen that 
the comultiplication sends $y_0 \mapsto (y_0 + z_0)$, hence we see that $y_0$ defines a nonzero cohomology class.
By the last part of \Cref{endoauto}, we see that $y_0$ has weight $1$.
Note that $\mathrm{H}^1(F^* F_* \mathrm{dR}(k[x]))$
is a free rank 1 module over $k[x]$, we see that it is indecomposable, hence the whole $\mathrm{H}^1$ has weight $1$.
The rest follows from K\"{u}nneth formula consideration for polynomial algebras, and the fact that $\Fil^{\conj}_{\bullet}(F^* F_* \mathrm{dR})$
is obtained by left Kan extension from polynomial algebras.
\end{proof}

Since any $\mathbb{G}_m$-representation decomposes canonically according to $\mathbb{G}_m$-weights, we get the following:
\begin{corollary}[c.f.~{\cite[Theorem 1.2]{Petrov25}}]
Let $X$ be a smooth $k$-scheme, then the Frobenius twisted de Rham complex is functorially formal:
\[
F^* F_* \mathrm{dR}_X \cong \bigoplus_{i \geq 0} F^* \Omega^i_{X^{(1)}/k}[-i].
\]
\end{corollary}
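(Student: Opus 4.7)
The strategy is to extract the splitting of $F^*F_*\mathrm{dR}_X$ from the $\mathbb{G}_m$-equivariant structure provided by \cref{thm5.4}. Namely, once we know that the conjugate filtration is $\mathbb{G}_m$-stable and that $\gr^{\mathrm{conj}}_i F^*F_*\mathrm{dR}[i]$ is purely of weight $i$, the key observation is that a $\mathbb{G}_m$-equivariant filtration on a complex over $k$ (a field of characteristic $p$, so $\mathbb{G}_m$ is linearly reductive) whose successive graded pieces live in pairwise distinct weights must split canonically into a direct sum of its associated graded pieces.

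More precisely, I would proceed as follows. First, by \cref{thm5.4} the conjugate filtration $\mathrm{Fil}^{\mathrm{conj}}_\bullet F^*F_*\mathrm{dR}_X$ is exhaustive and bounded below, and each graded piece $\gr^{\mathrm{conj}}_i F^*F_*\mathrm{dR}_X[i]$ is a $\mathbb{G}_m$-representation concentrated in weight $i$. Consequently, the extensions controlling successive stages of the filtration are classified by $\mathrm{Ext}^1$ groups between objects of different weights in the category of $\mathbb{G}_m$-equivariant complexes; since $\mathbb{G}_m$-representations decompose canonically into isotypic components (equivalently, $B\mathbb{G}_m$-equivariant complexes over $k$ split into weight eigenspaces), these Ext groups vanish. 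Hence each extension in the tower splits canonically, and taking the colimit produces the canonical $\mathbb{G}_m$-equivariant isomorphism
\[
F^*F_*\mathrm{dR}_X \;\cong\; \bigoplus_{i \geq 0} \gr^{\mathrm{conj}}_i F^*F_*\mathrm{dR}_X.
\]

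To conclude, I would identify the graded pieces: by the standard Cartier-type description of the conjugate filtration on (Frobenius twisted) derived de Rham cohomology, there is a canonical isomorphism
\[
\gr^{\mathrm{conj}}_i F^*F_*\mathrm{dR}_X \;\cong\; F^*\Omega^i_{X^{(1)}/k}[-i],
\]
functorial in the smooth $k$-scheme $X$. Plugging this into the decomposition above yields the claimed formula.

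The only step that requires any genuine argument is the first one, namely that a $\mathbb{G}_m$-equivariant filtration whose graded pieces have pairwise distinct weights is canonically split. I expect this to be the main, though still routine, obstacle: one has to work in the derived $\infty$-category and use that tensoring with the regular representation of $\mathbb{G}_m$, or equivalently taking the weight-eigenspace decomposition of an object of $D(X,\mathcal{O}_{X^{(1)}})$ endowed with a $\mathbb{G}_m$-action, defines an exact functor which on each graded piece acts as the identity on the correct weight and kills all others. Applied to the filtered object, this projects out each $\gr^{\mathrm{conj}}_i F^*F_*\mathrm{dR}_X[i]$ canonically as a direct summand, providing the splitting functorially in $X$.
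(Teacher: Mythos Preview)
Your proposal is correct and follows essentially the same approach as the paper: both deduce the splitting directly from \cref{thm5.4} together with the fact that $\mathbb{G}_m$-representations decompose canonically into weight eigenspaces. The paper states this in one line, while you spell out the mechanism (Ext-vanishing between distinct weights and the Cartier identification of the graded pieces), but the underlying argument is identical.
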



\subsection*{Acknowledgments}
The authors are thankful to Bhargav Bhatt, Luc Illusie and Alexander Petrov for helpful comments and questions. The research was done when the second author was a postdoctoral fellow at the University of British Columbia.


\bibliographystyle{amsalpha}
\bibliography{references}

\providecommand{\bysame}{\leavevmode\hbox to3em{\hrulefill}\thinspace}
\providecommand{\MR}{\relax\ifhmode\unskip\space\fi MR }
\providecommand{\MRhref}[2]{%
  \href{http://www.ams.org/mathscinet-getitem?mr=#1}{#2}
}
\providecommand{\href}[2]{#2}
\begin{thebibliography}{BMS19}

\bibitem[Bha12]{Bha12}
Bhargav Bhatt, \emph{p-adic derived de {Rham} cohomology}, 2012, arXiv:1204.6560.

\bibitem[Bha23]{fg}
\bysame, \emph{{Prismatic $F$-gauges}}, 2023, \url{https://www.math.ias.edu/~bhatt/teaching/mat549f22/lectures.pdf}.

\bibitem[BL22]{BL22a}
Bhargav Bhatt and Jacob Lurie, \emph{Absolute prismatic cohomology}, 2022, arXiv:2201.06120.

\bibitem[BL222]{BL22b}
\emph{The prismatization of $p$-adic formal schemes}, 2022, arXiv:2201.06124.

\bibitem[BMS19]{BMS19}
Bhargav Bhatt, Matthew Morrow, and Peter Scholze, \emph{Topological {H}ochschild homology and integral {$p$}-adic {H}odge theory}, Publ. Math. Inst. Hautes \'{E}tudes Sci. \textbf{129} (2019), 199--310. \MR{3949030}

\bibitem[DI87]{DI87}
Pierre Deligne and Luc Illusie, \emph{Rel\`evements modulo {$p^2$} et d\'{e}composition du complexe de de {R}ham}, Invent. Math. \textbf{89} (1987), no.~2, 247--270. \MR{894379}

\bibitem[Dri24]{drinfeld}
Vladimir Drinfeld, \emph{Prismatization}, Selecta Math. (N.S.) \textbf{30} (2024), no.~3, Paper No. 49, 150. \MR{4742919}

\bibitem[LM24]{LM24}
Shizhang Li and Shubhodip Mondal, \emph{On endomorphisms of the de {R}ham cohomology functor}, Geom. Topol. \textbf{28} (2024), no.~2, 759--802. \MR{4718127}

\bibitem[Mat16]{Mat16}
Akhil Mathew, \emph{The {G}alois group of a stable homotopy theory}, Adv. Math. \textbf{291} (2016), 403--541. \MR{3459022}

\bibitem[Mon22]{M22}
Shubhodip Mondal, \emph{{$\Bbb{G}^{\rm perf}_a$}-modules and de {R}ham cohomology}, Adv. Math. \textbf{409} (2022), Paper No. 108691, 72. \MR{4483247}

\bibitem[Pet25]{Petrov25}
Alexander Petrov, \emph{Decomposition of de {Rham} complex for quasi-{$F$}-split varieties}, 2025.

\end{thebibliography}
\end{document}